\theoremstyle{plain}
\newtheorem{theorem}{Theorem}
\newtheorem{corollary}[theorem]{Corollary}
\newtheorem{lemma}[theorem]{Lemma}
\theoremstyle{definition}
\newcommand{\B}{\mathbb}
\newcommand{\C}{\mathcal}
\newcommand{\ga}{\alpha}
\newcommand{\gd}{\delta}
\newcommand{\eps}{\varepsilon}
\newcommand{\gq}{\vartheta}
\newcommand{\tbf}{\textbf}
\begin{document}

\title[Rational approximations related to Fermat numbers]{On the rational approximation of the sum of the reciprocals of the Fermat numbers}
\thanks{The research of M.~Coons is supported by a Fields--Ontario Fellowship and NSERC}
\date{\today}
\author{Michael Coons}
\address{University of Waterloo, Department of Pure Mathematics, Waterloo, Ontario, N2L 3G1, Canada}
\email{mcoons@math.uwaterloo.ca}

\subjclass[2010]{11J82 (primary); 41A21 (secondary)}%
\keywords{Irrationality exponents, Pad\'e approximants, Hankel determinants, Fermat numbers.}%

\begin{abstract}
Let $\C{G}(z):=\sum_{n=0}^\infty z^{2^n}(1-z^{2^n})^{-1}$ denote the generating function of the ruler function, and $\C{F}(z):=\sum_{n=0}^\infty z^{2^n}(1+z^{2^n})^{-1}$; note that the special value $\C{F}(1/2)$ is the sum of the reciprocals of the Fermat numbers $F_n:=2^{2^n}+1$. The functions $\C{F}(z)$ and $\C{G}(z)$ as well as their special values have been studied by Mahler, Golomb, Schwarz, and Duverney; it is known that the numbers $\C{F}(\ga)$ and $\C{G}(\ga)$ are transcendental for all algebraic numbers $\ga$ which satisfy $0<\ga<1$.

For a sequence $\mathbf{u}$, denote the Hankel matrix $H_n^p(\mathbf{u}):=(u({p+i+j-2}))_{1\leqslant i,j\leqslant n}$. Let $\ga$ be a real number. The {\em irrationality exponent} $\mu(\ga)$ is defined as the supremum of the set of real numbers $\mu$ such that the inequality $|\ga-p/q|<q^{-\mu}$ has infinitely many solutions $(p,q)\in\B{Z}\times\B{N}.$

In this paper, we first prove that the determinants of $H_n^1(\mathbf{g})$ and $H_n^1(\mathbf{f})$ are nonzero for every $n\geqslant 1$. We then use this result to  prove that for $b\geqslant 2$ the irrationality exponents $\mu(\C{F}(1/b))$ and $\mu(\C{G}(1/b))$ are equal to $2$; in particular, the irrationality exponent of the sum of the reciprocals of the Fermat numbers is $2$. 
\end{abstract}

\maketitle

%%%%%%%%%%%%%%%%%%%%%%%%%%%%%%%%%%%%%%%%%%%%%%%%%%%%%%%%
%%%%%%%%%%%%%%%%%%%%%%%%%%%%%%%%%%%%%%%%%%%%%%%%%%%%%%%%

%%%%%%%%%%%%%%%%%%%%%%%%%%%%%%%%%%%%%%%%%%%%%%%%%%%%%%%%%%%%%%%%%%%%%%%%%%%%%
\section{Introduction}
%%%%%%%%%%%%%%%%%%%%%%%%%%%%%%%%%%%%%%%%%%%%%%%%%%%%%%%%%%%%%%%%%%%%%%%%%%%%%

For $n\geqslant 0$ the $n$th {\em Fermat number} is given by $F_n:=2^{2^n}+1.$ In 1963, Golomb \cite{G1963} proved that the sum of the reciprocals of the Fermat numbers is irrational and then in 2001 Duverney \cite{D2001} proved transcendence though these results were probably known to Mahler as early as the late 1920s \cite{M1929,M1930a,M1930b}. In the same paper, Golomb proved something substantially more general; he defined the functions \begin{equation}\label{FGfg}\C{F}(z):=\sum_{n\geqslant 1}f(n)z^n=\sum_{n=0}^\infty\frac{z^{2^n}}{1+z^{2^n}}\quad\mbox{and}\quad  \C{G}(z):=\sum_{n\geqslant 1}g(n)z^n=\sum_{n=0}^\infty\frac{z^{2^n}}{1-z^{2^n}},\end{equation} and showed that both $ \C{F}(1/b)$ and $ \C{G}(1/b)$ are irrational for all positive integers $b\geqslant 2$; note that the special value $\C{F}(1/2)$ corresponds to the sum of the reciprocals of the Fermat numbers. Indeed, it is known that for $b\geqslant 2$ all of $ \C{F}(1/b)$ and $ \C{G}(1/b)$ are transcendental (this follows from results of Mahler \cite{M1929,M1930a,M1930b}; see Schwarz \cite{S1967} and Coons \cite{C2011} for details).

Let $\ga$ be a real number. The {\em irrationality exponent} $\mu(\ga)$ is defined as the supremum of the set of real numbers $\mu$ such that the inequality $$\left|\ga-\frac{p}{q}\right|<\frac{1}{q^\mu}$$ has infinitely many solutions $(p,q)\in\B{Z}\times\B{N}.$ For example, Liouville \cite{L1844} proved that for any sequence $\mathbf{a}:=\{a(n)\}_{n\geqslant 0}$ with $a(n)\in\{0,1\}$ for all $n$ and $a(n)$ not eventually zero, the numbers $\ga(\mathbf{a}):=\sum_{n\geqslant 0}a(n)10^{-n!}$ have $\mu(\ga(\mathbf{a}))=\infty$, and Roth \cite{R1955} showed that if $\ga$ is an irrational algebraic number, then $\mu(\ga)=2$. Note also that $\mu(\ga)\geqslant 2$ for all irrational $\ga$. 

In this paper, considering special values of the above series, we prove the following result.

\begin{theorem}\label{main} Let $b\geqslant 2$ be a positive integer. Then $\mu(\C{G}(1/b))=\mu(\C{F}(1/b))=2.$ In particular, $$\mu\left(\sum_{n\geqslant 0}\frac{1}{2^{2^n}+1}\right)=2.$$
\end{theorem}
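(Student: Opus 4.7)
Since $\mu(\alpha)\geqslant 2$ holds automatically for every irrational $\alpha$, it suffices to prove the upper bound $\mu(\C{F}(1/b))\leqslant 2$; the argument for $\C{G}(1/b)$ is strictly parallel. My plan is to leverage the non-vanishing of the Hankel determinants $\det H_n^1(\mathbf{f})$ (established earlier in the paper) to construct the Pad\'e approximants of type $[n-1/n]$ to $\C{F}(z)$ at the origin, and then specialise $z=1/b$ to obtain infinitely many rational approximations to $\C{F}(1/b)$ of quality sufficient to pin the irrationality exponent at~$2$.

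For each $n\geqslant 1$ the hypothesis $\det H_n^1(\mathbf{f})\neq 0$ is exactly the non-degeneracy condition that forces the $[n-1/n]$-Pad\'e approximant $P_n(z)/Q_n(z)$ of $\C{F}(z)$ to exist and be unique, with $\deg P_n\leqslant n-1$ and $\deg Q_n\leqslant n$. Normalising (multiply through by $\det H_n^1(\mathbf{f})$) one may take $P_n,Q_n\in\B{Z}[z]$ and write
$$ Q_n(z)\C{F}(z) - P_n(z) = e_n z^{2n} + O(z^{2n+1}), \qquad e_n = \pm\frac{\det H_{n+1}^1(\mathbf{f})}{\det H_n^1(\mathbf{f})}\neq 0, $$
so the residuals never vanish identically. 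Setting $p_n:=b^n P_n(1/b)$ and $q_n:=b^n Q_n(1/b)$ produces integers with $|q_n\C{F}(1/b)-p_n|=b^n|E_n(1/b)|$, where the error $E_n$ begins at $z^{2n}$ and so carries a factor of $b^{-2n}$ upon specialisation.

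To finish, I would verify that these approximations satisfy $|\C{F}(1/b)-p_n/q_n|\leqslant q_n^{-2+o(1)}$ with $\log q_{n+1}/\log q_n\to 1$, at which point a standard gap lemma (any rational $p/q$ with $q_n\leqslant q<q_{n+1}$ that is not equal to $p_n/q_n$ must satisfy $|\C{F}(1/b)-p/q|\geqslant 1/(2q q_n)\geqslant q^{-2-o(1)}$) forces $\mu(\C{F}(1/b))\leqslant 2$. Pairwise distinctness of the $p_n/q_n$ is automatic from $e_n\neq 0$. The ingredients are the elementary bound $|f(m)|\leqslant v_2(m)+1 = O(\log m)$, which drops out of expanding each $z^{2^j}/(1+z^{2^j})$ as a geometric series, together with the Cramer-rule expression of the coefficients of $P_n$ and $Q_n$ as minors of the Hankel matrix.

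\textbf{Main obstacle.} The delicate step is sharp control of the coefficients of $P_n$ and $Q_n$. A direct application of Hadamard's inequality to the Cramer determinants, combined with $|f(m)|=O(\log m)$, yields only $\log\max(|p_n|,|q_n|)=O(n\log n)$, which is too weak: to obtain approximation quality $q_n^{-2+o(1)}$ one needs the much sharper $\log|q_n|=n\log b + o(n)$. Securing this requires exploiting the specific structure of $\C{F}$ (and $\C{G}$) beyond the Hankel non-vanishing per se -- most naturally via the Mahler-type functional equation $\C{F}(z)=\C{F}(z^2)+z/(1+z)$, which propagates Pad\'e approximants from level $n$ to level $2n$ while keeping coefficient growth under control. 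It is precisely this interplay between the Hankel non-vanishing and the $z\mapsto z^2$ structure of the series that does the arithmetic work here.
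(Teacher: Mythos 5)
You have correctly located the engine of the proof (Hankel non-vanishing forces the $[k-1/k]$ Pad\'e approximants to exist with a non-vanishing residual, via Lemma \ref{HP}), and you have also correctly located the point where your argument breaks: without control on the integer coefficients of $P_n$ and $Q_n$, the denominator $q_n=b^nQ_n(1/b)$ can be as large as $e^{O(n\log n)}$ while the residual is only $\asymp b^{-2n}$, and then the specialised approximations are nowhere near quality $q_n^{-2+o(1)}$ (indeed even the asymptotic $|Q_n(1/b)\C{F}(1/b)-P_n(1/b)|\asymp|e_n|b^{-2n}$ is unjustified, since the tail coefficients of $Q_n(z)\C{F}(z)-P_n(z)$ inherit the same uncontrolled size). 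This is a genuine gap, and your proposed repair --- iterating $\C{F}(z)=\C{F}(z^2)+z/(1+z)$ to ``propagate Pad\'e approximants from level $n$ to level $2n$ while keeping coefficient growth under control'' --- is not developed and is not what actually closes it. The paper never controls the coefficients of high-order Pad\'e approximants at all.

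What the paper does instead is structurally different in two respects you are missing. First, it \emph{fixes} a small order $k$, so that $P_{k,0},Q_{k,0}$ are fixed polynomials with constant coefficients, and then substitutes $z\mapsto z^{2^m}$ using the functional equation $\C{H}(z^{2^m})=\C{H}(z)-\sum_{j=0}^{m-1}z^{2^j}/(1+\eps z^{2^j})$; this produces, for each fixed $k$, an infinite family of approximations with denominator $q_{k,m}\asymp b^{2^m(k+1)}$ and two-sided error $\asymp b^{-2^m(2k+1)}$, i.e.\ of quality only $q^{-(1+k/(k+1))}$ --- strictly worse than $q^{-2}$, so no single family suffices and your gap lemma cannot be applied to it. Second, to recover the exponent $2$ one must interleave the families for the various $k$ with $k+1$ odd in a dyadic window $[2^{K-1}+1,2^K+1]$ (Lemma \ref{Blem2}) so that consecutive denominators satisfy $Q_{n+1}\leqslant C\,Q_n^{\theta}$ with $\theta=(2^{K-1}+3)/(2^{K-1}+1)$, and then invoke the Adamczewski--Rivoal transfer lemma (Lemma \ref{ARLem}), whose two-sided hypothesis (the lower bound $\geqslant c_1q^{-2}$ is essential, exactly to handle rationals coinciding with an approximant) yields $\mu\leqslant 2\theta(2^K+1)/2^K\to 2$ as $K\to\infty$. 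Neither the ``fixed $k$, varying $m$'' device nor the interleaving-over-$k$ step appears in your proposal, and without them the argument does not close.
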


Our method of proof is based on a method used recently by Bugeaud to prove that the irrationality exponent of the Thue--Morse--Mahler number is 2. To formalize, the Thue--Morse--Mahler sequence $\mathbf{t}:=\{t(n)\}_{n\geqslant 0}$ is defined by $t(0)=0$ and for $k\geqslant 0$, $t(2k)=t(k)$ and $t(2k+1)=1-t(k)$, and denote by $\C{T}(z)$ the generating function $$\C{T}(z)=\sum_{k\geqslant 0}t(k)z^k.$$ Bugeaud \cite{Bpreprint} proved for every $b\geqslant 2$ that $\mu(\C{T}(1/b))=2$. To do this, Bugeaud exploited a link between Pad\'e approximants and Hankel matrices (this connection is recorded as Lemma \ref{HP} of Section 3 of this paper) combined with a result of Allouche, Peyri\'ere, Wen and Wen \cite[Theorem 2.1]{APWW1998}, to provide a good rational approximation to the generating function $\C{T}(z)$, which was in turn used to prove his result.

For a sequence $\mathbf{u}=\{u(j)\}_{j\geqslant 0}$, we define the {\em Hankel matrix} $$H_n^p(\mathbf{u}):= (u({p+i+j-2}))_{1\leqslant i,j\leqslant n}.$$

The outline of this paper is as follows. In Section \ref{Sec2} we prove  

\begin{theorem}\label{mainH} Let $\mathbf{g}:=\{g(n)\}_{n\geqslant 1}$ and $\mathbf{f}:=\{f(n)\}_{n\geqslant 1}$ be the sequences defined in \eqref{FGfg}. The determinants of the Hankel matrices $H_n^1(\mathbf{g})$ and $H_n^1(\mathbf{f})$ are all nonzero.
\end{theorem}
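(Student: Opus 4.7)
The plan is to derive recurrences for the Hankel determinants $F_n := \det H_n^1(\mathbf{f})$ and $G_n := \det H_n^1(\mathbf{g})$ by exploiting the self-similarity of $\mathbf{f}$ and $\mathbf{g}$ that comes from the functional equations $\C{G}(z) = z/(1-z) + \C{G}(z^2)$ and $\C{F}(z) = z/(1+z) + \C{F}(z^2)$. These equations give $g(2k-1) = f(2k-1) = 1$, $g(2k) = g(k)+1$, $f(2k) = f(k)-1$, and upon addition yield $\C{F}(z)+\C{G}(z) = 2z/(1-z)$, i.e., $f(k)+g(k)=2$ for every $k\geq 1$, so that $H_n^1(\mathbf{f}) = 2J_n - H_n^1(\mathbf{g})$ where $J_n$ is the $n\times n$ all-ones matrix. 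For $n = 2m$, I would permute rows and columns of $H_{2m}^1(\mathbf{g})$ to place odd indices before even ones, which produces the symmetric block form with diagonal blocks $J_m$ and off-diagonal blocks $J_m+H_m^1(\mathbf{g})$. Applying the identity $\det\bigl(\begin{smallmatrix}A&B\\B&A\end{smallmatrix}\bigr)=\det(A+B)\det(A-B)$ and then the matrix-determinant lemma to the rank-one perturbation by $2\mathbf{1}\mathbf{1}^T$ yields $F_{2m} = F_m G_m$ and $G_{2m} = G_m\bigl(2(-1)^m G_m - F_m\bigr)$.

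For $n = 2m+1$, the analogous parity reordering yields a non-square block structure. Subtracting the first odd-indexed row from each of the remaining odd-indexed rows and applying a Laplace expansion along the first $m+1$ columns produces two complementary minors; a subsequent column reduction, using $g(i+k)-g(i) = \sum_{j=0}^{k-1}\Delta g(i+j)$ with $\Delta g(j):=g(j+1)-g(j)$, identifies each of them (up to transposition) with $D_m := \det H_m^1(\Delta\mathbf{g})$. Since $\Delta f = -\Delta g$ gives the same squared value, this yields $F_{2m+1} = G_{2m+1} = (-1)^m D_m^2$. Because $\Delta g$ satisfies $\Delta g(2k-1) = -\Delta g(2k) = g(k)$, factoring out the alternating signs by a diagonal conjugation identifies $D_m$ with the determinant of the Hankel matrix of the ``doubled'' sequence $w(2k-1)=w(2k)=g(k)$; a parity-based block reordering of that Hankel matrix and a Schur-complement computation (exploiting $H_m^2(\mathbf{g}) - H_m^1(\mathbf{g}) = H_m^1(\Delta\mathbf{g})$) then produce the pair of auxiliary recurrences $D_{2m} = G_m D_m$ and $D_{2m+1} = G_{m+1} D_m$.

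I would close the proof by simultaneous strong induction on $n$, showing that $F_n$, $G_n$, and $D_n$ are all odd integers, starting from $F_1 = G_1 = D_1 = 1$. Products of odd integers remain odd; the combination $2(-1)^m G_m - F_m$ is odd because its first summand is even while $F_m$ is odd by induction; and $(-1)^m D_m^2$ is odd whenever $D_m$ is. In particular all three sequences take only nonzero integer values, which gives Theorem~\ref{mainH}. The main obstacle is the odd-index reduction: carrying out the Laplace expansion cleanly and correctly identifying the resulting minor as a Hankel determinant of the difference sequence $\Delta\mathbf{g}$. Once that algebraic step is in place, the Schur-complement recurrences for $D_n$ are routine and the induction is mechanical.
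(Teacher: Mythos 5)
Your proposal is correct, and it takes a genuinely different route from the paper. The paper reduces everything modulo $2$ at the outset and runs the Allouche--Peyri\`ere--Wen--Wen machinery: the same parity permutation of rows and columns that you use, but followed by their block-determinant lemmas involving the auxiliary determinants $|\overline{A}|$, a large coupled system of congruences for $|H_n^p(\mathbf{g}^0)|$, $|H_n^p(\mathbf{g}^1)|$ and their barred versions for $p\in\{0,1,2\}$, and an induction on $n$ modulo $6$ whose base cases $n\leqslant 12$ are checked by computer. You instead work over $\B{Z}$ and close an exact three-term system $F_{2m}=F_mG_m$, $G_{2m}=G_m\bigl(2(-1)^mG_m-F_m\bigr)$, $F_{2m+1}=G_{2m+1}=(-1)^mD_m^2$, $D_{2m}=G_mD_m$, $D_{2m+1}=G_{m+1}D_m$, and finish with a one-line parity induction from $F_1=G_1=D_1=1$. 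I verified the pieces you flag as delicate: the relation $f(n)+g(n)=2$ is correct; the even-index block identity and the adjugate form of the matrix-determinant lemma (which needs no invertibility hypothesis, since $\det(H\pm 2\mathbf{1}\mathbf{1}^t)=\det H\pm 2\,\mathbf{1}^t\operatorname{adj}(H)\mathbf{1}$) give exactly your $F_{2m}$ and $G_{2m}$ formulas; and in the odd case, after clearing rows $2,\dots,m+1$ against row $1$, the Laplace expansion along the first $m+1$ columns has a \emph{single} surviving complementary pair of minors, each of which reduces by consecutive differences to $\det H_m^1(\Delta\mathbf{g})$, with Laplace sign $(-1)^{1+(m+1)(2m+1)}=(-1)^m$. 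The $D$-recurrences via the doubled sequence $w$ also check out, and all five identities agree with direct computation for small $n$ (e.g.\ $G_2=-3$, $G_3=-1$, $G_4=21$, $G_5=1$, $D_3=D_4=-3$). What your approach buys is substantial: no computer-verified base cases, no case analysis modulo $6$, no auxiliary barred determinants, a single argument covering $\mathbf{f}$ and $\mathbf{g}$ simultaneously, and strictly more information (exact integer recurrences and oddness of the determinants rather than only their nonvanishing modulo $2$). What the paper's route buys is that it follows the established APWW/Bugeaud template essentially verbatim, so each individual step is a quotation rather than a new computation.
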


\noindent In Section \ref{Sec3} we use this result, via a link with Pad\'e approximants, to prove Theorem~\ref{main}.

%%%%%%%%%%%%%%%%%%%%%%%%%%%%%%%%%%%%%%%%%%%%%%%%%%%%%%%%%%%%%%%%%%%%%%%%%%%%%
\section{Hankel determinants and the ruler function}\label{Sec2}
%%%%%%%%%%%%%%%%%%%%%%%%%%%%%%%%%%%%%%%%%%%%%%%%%%%%%%%%%%%%%%%%%%%%%%%%%%%%%

Note that if $\mathbf{g}=\{g(n)\}_{n\geqslant 1}$ is the sequence given in \eqref{FGfg}, then $g(n)$ is equal to the $2$--adic valuation of $2n$, known sometimes as the {\em ruler function}. For $n\geqslant 1$, the function $g(n)$ satisfies the recurrences $$g(2n+1)=1\qquad\mbox{and}\qquad g(2n)=1+g(n).$$ This sequence starts $$\mathbf{g}:=\{g(n)\}_{n\geqslant 1}=\{1,2,1,3,1,2,1,4,1,2,1,3,1,2,1,5,1,2,1,3,1,2,1,4,\ldots\}.$$ Since we will be working modulo $2$, we have two choices for $g(0)$, and we will have to make both; thus, let $\mathbf{g}^0:=\{0,g(1),g(2),\cdots\}$ be the sequence starting at $0$, and $\mathbf{g}^1:=\{1,g(1),g(2),\cdots\}$ be the sequence starting at $1$. 

We will need the following definitions and lemmas of Allouche, Peyri\`ere, Wen and Wen \cite{APWW1998}. The matrix $\mathbf{1}_{m\times n}$ is the $m\times n$ matrix with all its entries equal to $1$, and $\mathbf{0}_{m\times n}$ is the $m\times n$ matrix with all its entries equal to $0$. For the $n\times n$ square matrix $A$, we write $|A|$ and $A^t$ for the determinant of $A$ and the transpose of $A$, respectively, $\overline{A}$ for the matrix defined by $$\overline{A}:=\left(\begin{matrix} A & \mathbf{1}_{n\times 1}\\ \mathbf{1}_{n\times 1} & 0\end{matrix}\right),$$ and $A^{(j)}$ for the $n\times (n-1)$ matrix obtained by deleting the $j$th column of $A$. We write $I_n$ for the $n\times n$ identity matrix, and $$P_1(n)=(e_1,e_3,\ldots,e_{2\left\lfloor\frac{n+1}{2}\right\rfloor-1},e_2,e_4,\ldots,e_{2\left\lfloor\frac{n}{2}\right\rfloor}),$$ where $e_j$ is the column vector of length $n$ with a $1$ in its $j$th entry and zeros in the other entries. For convenience, throughout this paper we will write ``$\equiv$'' for equivalence modulo $2$.

The main result of this section is the following theorem.

\begin{theorem}\label{Hankelg} For all $n\geqslant 1$ we have \begin{align*} |H_n^{0}(\mathbf{g}^0)|\equiv|H_n^{2}(\mathbf{g}^0)|\equiv|H_n^{2}(\mathbf{g}^1)|&\equiv\begin{cases} 0 & \mbox{if $n\equiv 1,4\ ({\rm mod}\ 6)$}\\ 1 & \mbox{if $n\equiv 0,2,3,5\ ({\rm mod}\ 6)$},\end{cases}\\
|\overline{H_n^{0}(\mathbf{g}^0)}|&\equiv \begin{cases} 0 & \mbox{if $n\equiv 2,3\ ({\rm mod}\ 6)$}\\ 1 & \mbox{if $n\equiv 0,1,4,5\ ({\rm mod}\ 6)$},\end{cases}\\
|H_n^{0}(\mathbf{g}^1)|&\equiv\begin{cases} 0 & \mbox{if $n\equiv 1,2,4,5\ ({\rm mod}\ 6)$}\\ 1 & \mbox{if $n\equiv 0,3\ ({\rm mod}\ 6)$},\end{cases}\\
|\overline{H_n^{0}(\mathbf{g}^1)}|&\equiv \begin{cases} 0 & \mbox{if $n\equiv 0,1,2,3\ ({\rm mod}\ 6)$}\\ 1 & \mbox{if $n\equiv 4,5\ ({\rm mod}\ 6)$},\end{cases}\\
|H_n^{1}(\mathbf{g}^0)|\equiv |H_n^{1}(\mathbf{g}^1)|&\equiv 1,\\ 
|\overline{H_n^{1}(\mathbf{g}^0)}|\equiv|\overline{H_n^{1}(\mathbf{g}^1)}|&\equiv \begin{cases} 0 & \mbox{if $n\equiv 0,2,4\ ({\rm mod}\ 6)$}\\ 1 & \mbox{if $n\equiv 1,3,5\ ({\rm mod}\ 6)$},\end{cases}\\
|H_n^{2}(\mathbf{g}^0)|\equiv|H_n^{2}(\mathbf{g}^1)|&\equiv \begin{cases} 0 & \mbox{if $n\equiv 1,4\ ({\rm mod}\ 6)$}\\ 1 & \mbox{if $n\equiv 0,2,3,5\ ({\rm mod}\ 6)$},\end{cases}\\ 
|\overline{H_n^{2}(\mathbf{g}^0)}|\equiv |\overline{H_n^{2}(\mathbf{g}^1)}|&\equiv \begin{cases} 0 & \mbox{if $n\equiv 0,5\ ({\rm mod}\ 6)$}\\ 1 & \mbox{if $n\equiv 1,2,3,4\ ({\rm mod}\ 6)$}.\end{cases}
\end{align*}
\end{theorem}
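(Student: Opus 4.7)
\medskip

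\noindent\textbf{Proof plan for Theorem \ref{Hankelg}.} The plan is to prove all ten families of congruences simultaneously by induction on $n$, using a block reduction of each Hankel determinant modulo $2$ together with the self-similarity coming from the recurrences $g(2k+1)\equiv 1$ and $g(2k)\equiv g(k)+1\pmod 2$. The starting point is to separate rows and columns by parity of index: conjugating $H_n^p(\mathbf{g}^\eps)$ by the permutation $P_1(n)$ puts the matrix into the $2\times 2$ block form
\[
P_1(n)^t H_n^p(\mathbf{g}^\eps) P_1(n)=\begin{pmatrix} A & B\\ B^t & C\end{pmatrix},
\]
where the entries in $A$, $B^t$, and $B$ are either $g(2k+1)\equiv 1$ or $g(2k)\equiv g(k)+1$, so that (modulo~$2$) exactly one of the three blocks is an all-ones block while another has the form $\mathbf{1}+H_m^{p'}(\mathbf{g}^{\eps'})$ for an appropriate smaller parameters $m\approx n/2$, $p'\in\{0,1,2\}$, and $\eps'\in\{0,1\}$; the bordered versions $\overline{H_n^p}$ are handled analogously, the only change being an extra bordering row and column of $1$'s that mingle with the all-ones blocks produced by reduction.

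The second step is to do elementary row and column operations to eliminate the all-ones blocks. Subtracting one row from the next in the block of all-ones turns that block into a matrix with one surviving row, and correspondingly thins out the neighbouring blocks; after the dust settles each original determinant becomes, modulo $2$, a determinant of one of the basic types $|H_m^{p'}(\mathbf{g}^{\eps'})|$ or $|\overline{H_m^{p'}(\mathbf{g}^{\eps'})}|$ of size $m\in\{\lfloor n/2\rfloor,\lfloor n/2\rfloor\pm 1\}$, possibly multiplied by a $\pm 1$ and possibly augmented by one or two extra border rows that can be absorbed into an $\overline{\phantom{H}}$. This is exactly the mechanism of the Allouche--Peyri\`ere--Wen--Wen reduction, adapted to shift $p$ and to the two starting values $g(0)=0$ or $1$.

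The third step is to run the induction: since the reduction sends $n$ to $\lfloor n/2\rfloor$, and since $n\mapsto\lfloor n/2\rfloor$ carries the twelve residue classes of $n$ modulo $12$ onto the six residue classes of $\lfloor n/2\rfloor$ modulo $6$ in a two-to-one fashion, the period-$6$ pattern of each of the ten families is compatible with the halving. Concretely, I would verify the ten formulas by hand for $n=1,\dots,6$ (or $n\le 12$, as needed to seed every case), record a table describing, for each of the ten determinants and each residue of $n$ modulo $12$, which of the ten basic determinants of size $\lfloor n/2\rfloor$ it is equivalent to modulo $2$, and then check that substituting the inductive values in each row of this table reproduces the claimed value for $n$.

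The main obstacle is purely bookkeeping: there are ten parallel families with six residue classes each, and the reduction scheme is not quite a single identity but a finite list of block identities, one for each pair (shift $p$, parity of $n$). The careful point is making sure that the $(n\bmod 12)$ table of reductions is consistent with the tabulated $(m\bmod 6)$ values for $m=\lfloor n/2\rfloor$, so that the induction closes in one pass on all ten statements at once; once the table is written down the verification is a finite, mechanical check.
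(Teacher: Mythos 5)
Your overall strategy is the paper's: conjugate by $P_1$ to split rows and columns by parity, use the self-similarity $g(2k+1)\equiv 1$, $g(2k)\equiv g(k)+1$ to turn the blocks into all-ones matrices and matrices of the form $\mathbf{1}+H_m^{p'}(\mathbf{g}^{\eps'})$, seed the induction by checking $n\leqslant 12$, and close it by cases on $n$ modulo $6$. The bookkeeping you describe (a finite table of reductions indexed by $n\bmod 12$ and the shift $p$, checked against the tabulated values at $\lfloor n/2\rfloor$) is exactly what the proof of Theorem~\ref{Hankelg} does via Corollary~\ref{012}.

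However, there is a genuine gap in your second step. Your claim that every determinant reduces, after eliminating the all-ones blocks, to a \emph{single} basic determinant of size about $n/2$ (up to sign and bordering) fails for the odd-size, odd-shift cases $|H_{2n+1}^{2p+1}(\mathbf{g}^1)|$ and $|\overline{H_{2n+1}^{2p+1}(\mathbf{g}^1)}|$. There the block form \eqref{APWW9} has \emph{both} diagonal blocks equal to all-ones matrices (since $K^{\mathrm{odd}}=\mathbf{1}$), with all the content sitting in the non-square off-diagonal block $(\mathbf{1}+H_{n+1}^{p+1})^{(n+1)}$; no amount of row operations turns this into one smaller Hankel determinant, and indeed the correct answers (items (vii) and (viii) of Lemma~\ref{mainlemma}) are quadratic expressions in several smaller determinants. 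The paper gets around this with the Hankel determinant recurrence \eqref{alice}, $|H_{n}^{p}|\cdot|H_{n}^{p+2}|-|H_{n}^{p+1}|^2=|H_{n-1}^{p+2}|\cdot|H_{n+1}^{p}|$, which modulo $2$ (where $x^2\equiv x$) expresses the odd-shift determinant in terms of even-shift ones that do reduce by your block mechanism; a second application of \eqref{alice} to the sequence $\mathbf{g}^1+1$ handles the barred version. Without this extra identity your induction cannot reach $|H_{2n+1}^{1}(\mathbf{g}^{\eps})|$ and $|\overline{H_{2n+1}^{1}(\mathbf{g}^{\eps})}|$ --- and the family $|H_n^1|\equiv 1$ is precisely the one Theorem~\ref{mainH} and the irrationality-exponent argument depend on.
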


To prove Theorem \ref{Hankelg} we will rely heavily in the following three lemmas, which originally occurred as Lemmas 1.2, 1.3, and 1.4 of \cite{APWW1998}.

\begin{lemma}[(Allouche et al.~\cite{APWW1998})]\label{2by2} Let $A$ and $B$ be two square matrices of order $m$ and $n$ respectively, and $a,b,x$ and $y$ for numbers. One has $$\left|\begin{matrix} aA & y\mathbf{1}_{m\times n}\\ x\mathbf{1}_{n\times m} & bB\end{matrix}\right|=a^mb^n|A|\cdot|B|-xya^{m-1}b^{m-1}|\overline{A}|\cdot|\overline{B}|.$$
\end{lemma}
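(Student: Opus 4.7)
The plan is to treat the block matrix
$$M:=\left(\begin{matrix} aA & y\mathbf{1}_{m\times n}\\ x\mathbf{1}_{n\times m} & bB\end{matrix}\right)$$
as a rank-two perturbation of the block-diagonal matrix $D:=\left(\begin{matrix} aA & 0\\ 0 & bB\end{matrix}\right)$ and to apply the matrix determinant lemma. The enabling observation is that $\mathbf{1}_{m\times n}=\mathbf{1}_{m\times 1}\mathbf{1}_{1\times n}$ and $\mathbf{1}_{n\times m}=\mathbf{1}_{n\times 1}\mathbf{1}_{1\times m}$, so both off-diagonal blocks are rank one. Letting $\mathbf{u}_1,\mathbf{u}_2\in\B{R}^{m+n}$ denote the indicator column vectors of the first and second block coordinate ranges, I would write $M=D+UV^t$ with $U=(\mathbf{u}_1,\mathbf{u}_2)$ and $V=(y\mathbf{u}_2,x\mathbf{u}_1)$.

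Under the temporary assumption that $a,b\neq 0$ and that $A,B$ are invertible, the matrix determinant lemma yields
$$|M|=|D|\cdot\bigl|I_2+V^tD^{-1}U\bigr|.$$
Because $D^{-1}$ is block-diagonal, the cross terms $\mathbf{u}_1^tD^{-1}\mathbf{u}_2$ and $\mathbf{u}_2^tD^{-1}\mathbf{u}_1$ both vanish, reducing $I_2+V^tD^{-1}U$ to a $2\times 2$ matrix with $1$s on the diagonal whose off-diagonal entries are scalar multiples of the quadratic forms $\mathbf{1}_m^tA^{-1}\mathbf{1}_m$ and $\mathbf{1}_n^tB^{-1}\mathbf{1}_n$. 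These two forms are in turn identified via the Schur complement applied to the bordered matrices: $|\overline{A}|=-|A|\cdot\mathbf{1}_m^tA^{-1}\mathbf{1}_m$ and $|\overline{B}|=-|B|\cdot\mathbf{1}_n^tB^{-1}\mathbf{1}_n$. Substituting these and collecting powers of $a$ and $b$ will produce the claimed identity.

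Finally, the invertibility hypothesis can be dropped by a standard polynomial identity argument: both sides of the desired equation are polynomials in the entries of $A$, $B$ and in the scalars $a,b,x,y$, and since they agree on the Zariski-dense open locus where $a,b\neq 0$ and $A,B$ are invertible, they agree everywhere. I do not foresee a deep obstacle; the only place for slips is clerical, namely keeping track of signs through the two Schur-complement substitutions and matching the exponents of $a$ and $b$ (my derivation yields $a^{m-1}b^{n-1}$, which suggests that the $b^{m-1}$ in the stated formula should read $b^{n-1}$).
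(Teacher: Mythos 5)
The paper does not prove this lemma at all: it is imported verbatim (as Lemma 1.2) from Allouche--Peyri\`ere--Wen--Wen \cite{APWW1998}, so there is no in-paper argument to compare against. Judged on its own, your proof is correct and complete in outline. Writing $M=D+UV^t$ with the rank-one factorizations $\mathbf{1}_{m\times n}=\mathbf{1}_{m\times 1}\mathbf{1}_{1\times n}$, the matrix determinant lemma gives $|M|=|aA|\,|bB|\,\bigl|I_2+V^tD^{-1}U\bigr|$, the block-diagonality of $D^{-1}$ kills the diagonal perturbation terms, and the $2\times 2$ determinant evaluates to $1-\tfrac{xy}{ab}(\mathbf{1}^tA^{-1}\mathbf{1})(\mathbf{1}^tB^{-1}\mathbf{1})$; the Schur-complement identities $|\overline{A}|=-|A|\,\mathbf{1}^tA^{-1}\mathbf{1}$ and $|\overline{B}|=-|B|\,\mathbf{1}^tB^{-1}\mathbf{1}$ then produce exactly $a^mb^n|A|\,|B|-xya^{m-1}b^{n-1}|\overline{A}|\,|\overline{B}|$, and the Zariski-density step legitimately removes the invertibility hypotheses. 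You are also right that the exponent $b^{m-1}$ in the statement is a typo for $b^{n-1}$: a degree count in $b$ (the $xy$-term consumes one row from each off-diagonal block, leaving $n-1$ rows of $bB$) confirms this, as does the original statement in \cite{APWW1998}. Since the lemma is only ever applied in this paper modulo $2$ with $a=b=1$, the typo is harmless downstream, but your correction is the right reading.
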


\begin{lemma}[(Allouche et al.~\cite{APWW1998})]\label{3by3} Let $A$, $B$, and $C$ be three square matrices of order $m$, $n$, and $p$ respectively, and three numbers $a$, $b$, and $c$. One has \begin{align*}\left|\begin{matrix}A & c\mathbf{1}_{m\times n} & b\mathbf{1}_{m\times p}\\ c\mathbf{1}_{n\times m} & B & a\mathbf{1}_{n\times p}\\ b\mathbf{1}_{p\times m} & a\mathbf{1}_{p\times n} & C\end{matrix}\right|=|A|\cdot|B|\cdot&|C| -a^2|A|\cdot|\overline{B}|\cdot|\overline{C}|-b^2|\overline{A}|\cdot|B|\cdot|\overline{C}|\\ 
&-c^2|\overline{A}|\cdot|\overline{B}|\cdot|C|-2abc|\overline{A}|\cdot|\overline{B}|\cdot|\overline{C}|.\end{align*}
\end{lemma}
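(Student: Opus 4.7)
The plan is to recognize the block matrix in the statement as a rank-three perturbation of the block-diagonal matrix $D:=\mathrm{diag}(A,B,C)$ and apply the Weinstein--Aronszajn (matrix determinant) identity. Let $\mathbf{u}_1,\mathbf{u}_2,\mathbf{u}_3\in\mathbb{R}^{m+n+p}$ be the indicator vectors of the three diagonal blocks (so $\mathbf{u}_1$ has $1$s in its first $m$ coordinates and $0$s elsewhere, etc.), assemble them into the $(m+n+p)\times 3$ matrix $U=(\mathbf{u}_1,\mathbf{u}_2,\mathbf{u}_3)$, and set
$$Q=\begin{pmatrix}0&c&b\\ c&0&a\\ b&a&0\end{pmatrix}.$$
Using $\mathbf{1}_{i\times j}=\mathbf{1}_i\mathbf{1}_j^T$, one checks immediately that the block matrix on the left-hand side of the claimed identity is exactly $D+UQU^T$.

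Assume first that $A$, $B$, $C$ are invertible. Sylvester's identity then gives
$$\det(D+UQU^T)=\det(D)\det(I_3+QU^TD^{-1}U).$$
Since the $\mathbf{u}_i$ have disjoint supports, $U^TD^{-1}U$ is diagonal with entries $\alpha:=\mathbf{1}_m^T A^{-1}\mathbf{1}_m$, $\beta:=\mathbf{1}_n^T B^{-1}\mathbf{1}_n$, $\gamma:=\mathbf{1}_p^T C^{-1}\mathbf{1}_p$. The Schur complement formula applied to $\overline{A}$, viewed as a $2\times 2$ block matrix with the $1\times 1$ lower-right block equal to $0$, yields $|\overline{A}|=-|A|\alpha$, and analogously $|\overline{B}|=-|B|\beta$ and $|\overline{C}|=-|C|\gamma$. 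Direct expansion of
$$\det\begin{pmatrix}1&c\beta&b\gamma\\ c\alpha&1&a\gamma\\ b\alpha&a\beta&1\end{pmatrix}=1-a^2\beta\gamma-b^2\alpha\gamma-c^2\alpha\beta+2abc\,\alpha\beta\gamma,$$
followed by multiplication through by $|A||B||C|$ and substitution of the three identities $|A|\alpha=-|\overline{A}|$, $|B|\beta=-|\overline{B}|$, $|C|\gamma=-|\overline{C}|$, produces the asserted formula: the three minus signs combine in the cubic term to turn $+2abc\,\alpha\beta\gamma\,|A||B||C|$ into $-2abc\,|\overline{A}||\overline{B}||\overline{C}|$, and the three quadratic terms each pick up exactly two minus signs, which cancel.

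To remove the invertibility hypothesis, note that both sides of the identity are polynomials in the entries of $A,B,C$ and in $a,b,c$; since they agree on the Zariski-dense open set where $A,B,C$ are simultaneously invertible, they agree identically. The only genuinely delicate step is the sign bookkeeping at the end, where one must verify that the three minus signs coming from the Schur complement combine correctly across the constant, quadratic, and cubic homogeneous pieces; the rest is essentially automatic once the structural identification $M=D+UQU^T$ has been spotted. There is no conceptual obstacle beyond recognizing that the off-diagonal blocks factor through the three-dimensional column space $\mathrm{span}(\mathbf{u}_1,\mathbf{u}_2,\mathbf{u}_3)$.
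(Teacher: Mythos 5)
Your proof is correct. Note, however, that the paper does not prove this lemma at all: it is imported verbatim as one of Lemmas 1.2--1.4 of Allouche, Peyri\`ere, Wen and Wen \cite{APWW1998}, so there is no in-paper argument to compare against. Your route --- writing the block matrix as $D+UQU^{t}$ with $D=\mathrm{diag}(A,B,C)$ and invoking the Sylvester/Weinstein--Aronszajn identity $\det(D+UQU^{t})=\det(D)\det(I_3+QU^{t}D^{-1}U)$ --- is a clean structural argument, and all the details check out: $U^{t}D^{-1}U$ is indeed $\mathrm{diag}(\alpha,\beta,\gamma)$ by disjointness of supports, the Schur-complement identity $|\overline{A}|=-|A|\,\alpha$ is right (with the analogous statements for $B$ and $C$), the $3\times 3$ expansion is correct, and the sign bookkeeping works out exactly as you describe; the final passage from the invertible case to the general case by polynomial identity on a Zariski-dense set is standard and valid. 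The original proof in \cite{APWW1998} reaches the same formula by more elementary means, exploiting that each off-diagonal block $x\mathbf{1}_{i\times j}$ has rank one and expanding the determinant multilinearly in columns, which keeps everything at the level of cofactor manipulations and produces the $|\overline{\cdot}|$ terms as bordered determinants directly. Your approach buys a more conceptual one-line explanation of where the correction terms come from (they are exactly the $3\times 3$ determinant of $I_3+QU^{t}D^{-1}U$) and generalizes immediately to more blocks, at the cost of an invertibility hypothesis that must then be removed by a density argument; the elementary approach is longer but works over any commutative ring without such a detour.
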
 

\begin{lemma}[(Allouche et al.~\cite{APWW1998})]\label{AbarA} Let $x\in\B{R}$ and $A$ be an $m\times m$ matrix, then \begin{quote}\begin{enumerate}
\item[(i)] $|x\mathbf{1}_{m\times m}+A|=|A|-x|\overline{A}|,$
\vspace{.2cm}
\item[(ii)] $|\overline{x\mathbf{1}_{m\times m}+A}|=|\overline{A}|,$
\vspace{.2cm}
\item[(iii)] $|\overline{-A}|=(-1)^{m+1}|\overline{A}|.$
\end{enumerate}
\end{quote}
\end{lemma}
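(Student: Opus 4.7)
The plan is to establish all three identities by elementary row and column operations on suitable bordered matrices; no machinery beyond multilinearity of the determinant is required.

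I would begin with part (ii), which is immediate. We have
\[
\overline{x\mathbf{1}_{m\times m}+A}=\left(\begin{matrix} A+x\mathbf{1}_{m\times m} & \mathbf{1}_{m\times 1}\\ \mathbf{1}_{1\times m} & 0\end{matrix}\right),
\]
whose last row is $(1,1,\ldots,1,0)$. Subtracting $x$ times this row from each of the first $m$ rows cancels every occurrence of $x$ in the upper-left block and leaves the last column unchanged (since its bottom entry is $0$); the result is exactly $\overline{A}$, which proves (ii).

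For part (i), I would introduce the auxiliary $(m+1)\times(m+1)$ matrix
\[
N:=\left(\begin{matrix} A & -x\,\mathbf{1}_{m\times 1}\\ \mathbf{1}_{1\times m} & 1\end{matrix}\right)
\]
and evaluate $|N|$ in two different ways. On the one hand, adding $x$ times the last row to each of the first $m$ rows converts $N$ into
\[
\left(\begin{matrix} A+x\mathbf{1}_{m\times m} & \mathbf{0}_{m\times 1}\\ \mathbf{1}_{1\times m} & 1\end{matrix}\right),
\]
which is block triangular with lower-right entry $1$, so $|N|=|A+x\mathbf{1}_{m\times m}|$. On the other hand, $N$ and $\overline{A}$ share the same first $m$ columns, so cofactor expansion of $|N|$ along the last column uses exactly the same $m\times m$ minors (with the same signs $(-1)^{i+m+1}$) as the analogous expansion of $|\overline{A}|$. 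The bottom-right entry $1$ of $N$ contributes $|A|$, while for $1\leqslant i\leqslant m$ the entry $-x$ of $N$ contributes $(-x)$ times what the corresponding entry $1$ of $\overline{A}$ contributes to $|\overline{A}|$. Summing gives $|N|=|A|-x|\overline{A}|$, and identity (i) follows by equating the two evaluations.

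For part (iii), starting from $\overline{-A}=\left(\begin{smallmatrix} -A & \mathbf{1}_{m\times 1}\\ \mathbf{1}_{1\times m} & 0\end{smallmatrix}\right)$, multiply each of the first $m$ rows by $-1$ (contributing a factor $(-1)^m$): the upper-left block becomes $A$ while the first $m$ entries of the last column become $-1$. Then multiply the last column by $-1$ (contributing another factor $-1$) to restore it to $\mathbf{1}_{m\times 1}$. The resulting matrix is $\overline{A}$, so $(-1)^{m+1}|\overline{-A}|=|\overline{A}|$, giving (iii). The only step requiring real care is the cofactor comparison in (i), where one must verify that the signed minors in the last-column expansions of $|N|$ and $|\overline{A}|$ match term by term; this is the main obstacle, but it is entirely mechanical once the observation that $N$ and $\overline{A}$ share their first $m$ columns has been made.
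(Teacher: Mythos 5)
Your three arguments are all correct: the row operation in (ii) does fix the last column because the border's corner entry is $0$; the two evaluations of $|N|$ in (i) are both valid, and the key observation that $N$ and $\overline{A}$ share their first $m$ columns (so the signed minors $(-1)^{i+m+1}M_{i,m+1}$ in the two last-column expansions coincide term by term) is exactly what makes the comparison work; and the sign bookkeeping in (iii), a factor $(-1)^m$ from the rows and $(-1)$ from the column, is right. Note, however, that the paper offers no proof of this lemma at all — it is imported verbatim as Lemma 1.4 of Allouche, Peyri\`ere, Wen and Wen \cite{APWW1998} — so there is nothing internal to compare against; your write-up serves as a self-contained elementary verification of a cited result, which is a reasonable thing to supply but goes beyond what the paper itself does.
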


For a sequence $\mathbf{u}=\{u(j)\}_{j\geqslant 0}$ define the matrix $K_n^p(\mathbf{u})$ by $$K_n^p(\mathbf{u}):=(u({p+2(i+j-2)}))_{1\leqslant i,j\leqslant n}.$$

\begin{lemma}\label{mainlemma} For all $n\geqslant 1$, we have 
\begin{enumerate}
\item[(i')] $|H_{2n}^{0}(\mathbf{g}^1)|=|H_{n}^{0}(\mathbf{g}^0)|\cdot|H_{n}^{1}(\mathbf{g}^1)|-|\overline{H_{n}^{0}(\mathbf{g}^0)}|\cdot|H_{n}^{1}(\mathbf{g}^1)|-|H_{n}^{0}(\mathbf{g}^0)|\cdot|\overline{H_{n}^{1}(\mathbf{g}^1)}|,$
\vspace{.1cm}
\item[] $|H_{2n}^{0}(\mathbf{g}^0)|=|H_{n}^{0}(\mathbf{g}^1)|\cdot|H_{n}^{1}(\mathbf{g}^1)|-|\overline{H_{n}^{0}(\mathbf{g}^1)}|\cdot|H_{n}^{1}(\mathbf{g}^1)|-|H_{n}^{0}(\mathbf{g}^1)|\cdot|\overline{H_{n}^{1}(\mathbf{g}^1)}|,$
\vspace{.1cm}
\item[(i'')] for $p\geqslant 1$, \begin{align*} |H_{2n}^{2p}(\mathbf{g}^1)|=|H_{n}^{p}(\mathbf{g}^1)|\cdot&|H_{n}^{p+1}(\mathbf{g}^1)|-|\overline{H_{n}^{p}(\mathbf{g}^1)}|\cdot|H_{n}^{p+1}(\mathbf{g}^1)|\\ &-|H_{n}^{p}(\mathbf{g}^1)|\cdot|\overline{H_{n}^{p+1}(\mathbf{g}^1)}|,\end{align*}
\item[(ii')] $|\overline{H_{2n}^{0}(\mathbf{g}^1)}|\equiv |H_{n}^{0}(\mathbf{g}^0)|\cdot |\overline{H_{n}^{1}(\mathbf{g}^1)}|+|\overline{H_{n}^{0}(\mathbf{g}^0)}|\cdot|H_{n}^{1}(\mathbf{g}^1)|,$
\vspace{.1cm}
\item[] $|\overline{H_{2n}^{0}(\mathbf{g}^0)}|\equiv |H_{n}^{0}(\mathbf{g}^1)|\cdot |\overline{H_{n}^{1}(\mathbf{g}^1)}|+|\overline{H_{n}^{0}(\mathbf{g}^1)}|\cdot|H_{n}^{1}(\mathbf{g}^1)|,$
\item[(ii'')] for $p\geqslant 1$, $|\overline{H_{2n}^{2p}(\mathbf{g}^1)}|\equiv |H_{n}^{p}(\mathbf{g}^1)|\cdot |\overline{H_{n}^{p+1}(\mathbf{g}^1)}|+|\overline{H_{n}^{p}(\mathbf{g}^1)}|\cdot|H_{n}^{p+1}(\mathbf{g}^1)|$
\item[(iii')] $|H_{2n+1}^{0}(\mathbf{g}^1)|=|H_{n+1}^{0}(\mathbf{g}^0)|\cdot |H_{n}^{1}(\mathbf{g}^1)|-|\overline{H_{n+1}^{0}(\mathbf{g}^0)}|\cdot|H_{n}^{1}(\mathbf{g}^1)|$
\item[] $\qquad\qquad\qquad\qquad-|H_{n+1}^{0}(\mathbf{g}^0)|\cdot|\overline{H_{n}^{1}(\mathbf{g}^1)}|$
\item[] $|H_{2n+1}^{0}(\mathbf{g}^0)|=|H_{n+1}^{0}(\mathbf{g}^1)|\cdot |H_{n}^{1}(\mathbf{g}^1)|-|\overline{H_{n+1}^{0}(\mathbf{g}^1)}|\cdot|H_{n}^{1}(\mathbf{g}^1)|$
\item[] $\qquad\qquad\qquad\qquad-|H_{n+1}^{0}(\mathbf{g}^1)|\cdot|\overline{H_{n}^{1}(\mathbf{g}^1)}|$,
\item[(iii'')] for $p\geqslant 1,$ \begin{align*}|H_{2n+1}^{2p}(\mathbf{g}^1)|=|H_{n+1}^{p}(\mathbf{g}^1)|\cdot &|H_{n}^{p+1}(\mathbf{g}^1)|-|\overline{H_{n+1}^{p}(\mathbf{g}^1)}|\cdot|H_{n}^{p+1}(\mathbf{g}^1)|\\ &-|H_{n+1}^{p}(\mathbf{g}^1)|\cdot|\overline{H_{n}^{p+1}(\mathbf{g}^1)}|,\end{align*}
\item[(iv')] $|\overline{H_{2n+1}^{0}(\mathbf{g}^1)}|\equiv |H_{n+1}^{0}(\mathbf{g}^0)|\cdot |\overline{H_{n}^{1}(\mathbf{g}^1)}|+|\overline{H_{n+1}^{0}(\mathbf{g}^0)}|\cdot|H_{n}^{1}(\mathbf{g}^1)|$,
\vspace{.1cm}
\item[] $|\overline{H_{2n+1}^{0}(\mathbf{g}^0)}|\equiv |H_{n+1}^{0}(\mathbf{g}^1)|\cdot |\overline{H_{n}^{1}(\mathbf{g}^1)}|+|\overline{H_{n+1}^{0}(\mathbf{g}^1)}|\cdot|H_{n}^{1}(\mathbf{g}^1)|$,
\vspace{.1cm}
\item[(iv'')] for $p\geqslant 1$, $$|\overline{H_{2n+1}^{2p}(\mathbf{g}^1)}|\equiv |H_{n+1}^{p}(\mathbf{g}^1)|\cdot |\overline{H_{n}^{p+1}(\mathbf{g}^1)}|+|\overline{H_{n+1}^{p}(\mathbf{g}^1)}|\cdot|H_{n}^{p+1}(\mathbf{g}^1)|,$$
\item[(v)] for $p\geqslant 0$, $|H_{2n}^{2p+1}(\mathbf{g}^1)|\equiv |H_{n}^{p+1}(\mathbf{g}^1)|$,
\vspace{.1cm}
\item[(vi)] for $p\geqslant 0$, $|\overline{H_{2n}^{2p+1}(\mathbf{g}^1)}|\equiv 0$,
\vspace{.1cm}
\item[(vii')] $|H_{2n+1}^{1}(\mathbf{g}^1)|\equiv \Big[\Big(|H_{n+1}^{0}(\mathbf{g}^0)|\cdot|H_{n}^{1}(\mathbf{g}^1)|-|\overline{H_{n+1}^{0}(\mathbf{g}^0)}|\cdot|H_{n}^{1}(\mathbf{g}^1)|$
\item[] $\qquad\qquad\qquad\qquad-|H_{n+1}^{0}(\mathbf{g}^0)|\cdot|\overline{H_{n}^{1}(\mathbf{g}^1)}|\Big)$
\vspace{-.4cm}
\item[] \begin{align*}\times\Big(|&H_{n+1}^{1}(\mathbf{g}^1)|\cdot|H_{n}^{2}(\mathbf{g}^1)|-|\overline{H_{n+1}^{1}(\mathbf{g}^1)}|\cdot|H_{n}^{2}(\mathbf{g}^1)|-|H_{n+1}^{1}(\mathbf{g}^1)|\cdot|\overline{H_{n}^{2}(\mathbf{g}^1)}|\Big)\Big]\\
&\qquad-\Big[\Big(|H_{n}^{1}(\mathbf{g}^1)|\cdot|H_{n}^{2}(\mathbf{g}^1)|-|\overline{H_{n}^{1}(\mathbf{g}^1)}|\cdot|H_{n}^{2}(\mathbf{g}^1)|-|H_{n}^{1}(\mathbf{g}^1)|\cdot|\overline{H_{n}^{2}(\mathbf{g}^1)}|\Big)\\
\times\Big(|&H_{n+1}^{0}(\mathbf{g}^0)|\cdot|H_{n+1}^{1}(\mathbf{g}^1)|-|\overline{H_{n+1}^{0}(\mathbf{g}^0)}|\cdot|H_{n+1}^{1}(\mathbf{g}^1)|\\
&\qquad\qquad\qquad-|H_{n+1}^{0}(\mathbf{g}^0)|\cdot|\overline{H_{n}^{1}(\mathbf{g}^1)}|\Big)\Big],
\end{align*}
\item[] $|H_{2n+1}^{1}(\mathbf{g}^0)|\equiv \Big[\Big(|H_{n+1}^{0}(\mathbf{g}^1)|\cdot|H_{n}^{1}(\mathbf{g}^1)|-|\overline{H_{n+1}^{0}(\mathbf{g}^1)}|\cdot|H_{n}^{1}(\mathbf{g}^1)|$
\item[] $\qquad\qquad\qquad\qquad-|H_{n+1}^{0}(\mathbf{g}^1)|\cdot|\overline{H_{n+1}^{1}(\mathbf{g}^1)}|\Big)$
\vspace{-.4cm}
\item[] \begin{align*}\times\Big(|&H_{n+1}^{1}(\mathbf{g}^1)|\cdot|H_{n}^{2}(\mathbf{g}^1)|-|\overline{H_{n+1}^{1}(\mathbf{g}^1)}|\cdot|H_{n}^{2}(\mathbf{g}^1)|-|H_{n+1}^{1}(\mathbf{g}^1)|\cdot|\overline{H_{n}^{2}(\mathbf{g}^1)}|\Big)\Big]\\
&\qquad-\Big[\Big(|H_{n}^{1}(\mathbf{g}^1)|\cdot|H_{n}^{2}(\mathbf{g}^1)|-|\overline{H_{n}^{1}(\mathbf{g}^1)}|\cdot|H_{n}^{2}(\mathbf{g}^1)|-|H_{n}^{1}(\mathbf{g}^1)|\cdot|\overline{H_{n}^{2}(\mathbf{g}^1)}|\Big)\\
\times\Big(|&H_{n+1}^{0}(\mathbf{g}^1)|\cdot|H_{n+1}^{1}(\mathbf{g}^1)|-|\overline{H_{n+1}^{0}(\mathbf{g}^1)}|\cdot|H_{n+1}^{1}(\mathbf{g}^1)|\\
&\qquad\qquad\qquad-|H_{n+1}^{0}(\mathbf{g}^1)|\cdot|\overline{H_{n+1}^{1}(\mathbf{g}^1)}|\Big)\Big],
\end{align*}
\item[(vii'')] for $p\geqslant 1$, \begin{align*}|H_{2n+1}^{2p+1}(\mathbf{g}^1)|\equiv &\left(|H_{n}^{p+2}(\mathbf{g}^1)|\cdot|\overline{H_{n+1}^{p}(\mathbf{g}^1)}|-|H_{n+1}^{p}(\mathbf{g}^1)|\cdot|\overline{H_{n}^{p+2}(\mathbf{g}^1)}|\right)\\
&\qquad\times\left(|H_{n}^{p+1}(\mathbf{g}^1)|\cdot|\overline{H_{n+1}^{p+1}(\mathbf{g}^1)}|-|H_{n+1}^{p+1}(\mathbf{g}^1)|\cdot|\overline{H_{n}^{p+1}(\mathbf{g}^1)}|\right),\end{align*}
\item[(viii')] $ |\overline{H_{2n+1}^{1}(\mathbf{g}^1)}| \equiv \left(|H_{n}^{2}(\mathbf{g}^1)|\cdot|\overline{H_{n+1}^{0}(\mathbf{g}^1)}|-|H_{n+1}^{0}(\mathbf{g}^1)|\cdot|\overline{H_{n}^{2}(\mathbf{g}^1)}|\right)$
\vspace{-.5cm} 
\item[] \begin{align*} &\qquad\qquad\qquad\qquad\times\left(|H_{n}^{1}(\mathbf{g}^1)|\cdot|\overline{H_{n+1}^{1}(\mathbf{g}^1)}|-|H_{n+1}^{1}(\mathbf{g}^1)|\cdot|\overline{H_{n}^{1}(\mathbf{g}^1)}|\right)\\
&\qquad\qquad+|H_{n+1}^{0}(\mathbf{g}^0)|\cdot|H_{n}^{1}(\mathbf{g}^1)|\cdot|H_{n+1}^{1}(\mathbf{g}^1)|\cdot |H_{n}^{2}(\mathbf{g}^1)|\\ 
&\qquad\qquad+|H_{n}^{1}(\mathbf{g}^1)|\cdot |H_{n}^{2}(\mathbf{g}^1)|\cdot|H_{n+2}^{0}(\mathbf{g}^0)|\cdot|\overline{H_{n+2}^{1}(\mathbf{g}^1)}|,\end{align*}
\item[(viii'')] for $p\geqslant 1$ \begin{align*}|\overline{H_{2n+1}^{2p+1}(\mathbf{g}^1)}| &\equiv \left(|H_{n}^{p+2}(\mathbf{g}^1)|\cdot|\overline{H_{n+1}^{p}(\mathbf{g}^1)}|-|H_{n+1}^{p}(\mathbf{g}^1)|\cdot|\overline{H_{n}^{p+2}(\mathbf{g}^1)}|\right)\\ &\qquad\times\left(|H_{n}^{p+1}(\mathbf{g}^1)|\cdot|\overline{H_{n+1}^{p+1}(\mathbf{g}^1)}|-|H_{n+1}^{p+1}(\mathbf{g}^1)|\cdot|\overline{H_{n}^{p+1}(\mathbf{g}^1)}|\right)\\
&\qquad+|H_{n+1}^{p}(\mathbf{g}^1)|\cdot|H_{n}^{p+1}(\mathbf{g}^1)|\cdot|H_{n+1}^{p+1}(\mathbf{g}^1)|\cdot |H_{n}^{p+2}(\mathbf{g}^1)|\\ 
&\qquad+|H_{n}^{p+1}(\mathbf{g}^1)|\cdot |H_{n}^{p+2}(\mathbf{g}^1)|\cdot|H_{n+2}^{p}(\mathbf{g}^1)|\cdot|H_{n+2}^{p+1}(\mathbf{g}^1)|.\end{align*}
\end{enumerate}
\end{lemma}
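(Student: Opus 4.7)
The proof follows the permutation-plus-block-determinant technique of Allouche, Peyri\`ere, Wen and Wen. The key input is the recurrence $g(2m+1)=1$ and $g(2m)=1+g(m)$ for $m\ge 1$, together with the two conventions $g^0(0)=0$ and $g^1(0)=1$. The plan is to apply the permutation $P_1(2n)$ (respectively $P_1(2n+1)$) simultaneously to rows and columns of each Hankel or augmented Hankel matrix, exposing a block structure whose blocks are either the all-ones matrix $\mathbf{1}$ or a translate $\mathbf{1}+H_m^q(\mathbf{g}^?)$ of a smaller Hankel matrix, and then to evaluate the resulting block determinant via Lemmas~\ref{2by2}, \ref{3by3}, and \ref{AbarA}.

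Concretely, for $H_{2n}^{2p}(\mathbf{g}^1)$ with $p\ge 1$ the $(k,\ell)$ entry of the odd--odd block is $g(2(p+k+\ell-2))=1+g(p+k+\ell-2)$, so that block equals $\mathbf{1}+H_n^p(\mathbf{g})$; the even--even block equals $\mathbf{1}+H_n^{p+1}(\mathbf{g})$; and every mixed-parity entry equals $g$ of an odd number, namely $1$. Lemma~\ref{2by2} with $a=b=x=y=1$, followed by the rewriting $|\mathbf{1}+H|=|H|-|\overline{H}|$ and $|\overline{\mathbf{1}+H}|=|\overline{H}|$ from Lemma~\ref{AbarA}(i)--(ii), yields exactly~(i''). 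The identities~(i'), (iii') and (iii'') are variations in which either the size is $2n+1$ (so the permutation splits into block sizes $n+1$ and $n$) or the superscript~$p$ equals $0$ (so the lone entry $g^?(0)$ in position $(1,1)$ of the odd--odd block must be treated separately); in the latter case one checks that $\mathbf{1}+H_n^0(\mathbf{g}^?)$ appears with the swapped superscript $\mathbf{g}^{1-?}$, producing the $\mathbf{g}^0\leftrightarrow\mathbf{g}^1$ interchange seen in the statement.

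For the augmented matrices $\overline{H_{2n}^{p}}$, the appended row and column of $1$'s together with the corner $0$ form their own block of size $1$, and after the same permutation the matrix is a $3\times 3$ block matrix to which Lemma~\ref{3by3} applies; the term $2abc\,|\overline{A}|\cdot|\overline{B}|\cdot|\overline{C}|$ vanishes modulo~$2$, which is exactly why items~(ii), (iv), (vi), (viii) are stated as congruences rather than identities. Items~(v) and~(vi) use the same permutation but with odd superscript $2p+1$: the diagonal blocks degenerate to $\mathbf{1}_{n\times n}$ while both off-diagonal blocks become $\mathbf{1}+H_n^{p+1}(\mathbf{g})$, after which elementary row/column operations modulo~$2$ bring the matrix to block-triangular form with diagonal $H_n^{p+1}(\mathbf{g})$; this gives $|H_{2n}^{2p+1}|\equiv|H_n^{p+1}|^2\equiv|H_n^{p+1}|\pmod 2$, while the augmented version picks up a duplicated column modulo~$2$ and so vanishes. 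The intricate identities~(vii) and~(viii) arise by combining the even-size analysis at sizes $n+1$ and $n$ when both the superscript and the size are odd: this expresses the $(2n+1)\times(2n+1)$ determinant as a bilinear combination of the four sub-determinants $|H_{n+1}^{p}|,|H_n^{p+1}|,|H_{n+1}^{p+1}|,|H_n^{p+2}|$ together with their augmentations, producing the products displayed in the lemma.

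The main obstacle is not any single deep idea but the combinatorial bookkeeping: one must handle separately the combinations of (size parity)$\times$(superscript parity)$\times$(plain vs.\ augmented)$\times$($p=0$ vs.\ $p\ge 1$), and in every $p=0$ case track the swap $\mathbf{g}^0\leftrightarrow\mathbf{g}^1$ induced by the ambiguous value $g^?(0)$. The most delicate subcases are~(vii') and~(viii') at $p=0$, where the mismatch between the $(n+1)\times(n+1)$ block that sees $g^?(0)$ and the $n\times n$ block that does not is exactly what forces the four-factor expressions in the statement; the cleanest route is to apply~(i')--(iv') with $n$ replaced by $n+1$ and cross-multiply with the parallel identity at size~$n$, then reduce modulo~$2$.
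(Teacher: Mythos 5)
Your treatment of items (i')--(vi) is correct and is essentially the paper's argument: conjugate by $P_1$ (bordered by an extra row and column in the augmented cases), use $g(2m)=1+g(m)$ and $g(2m+1)=1$ to identify the blocks as $\mathbf{1}$ or $\mathbf{1}+H_m^q$, track the $\mathbf{g}^0\leftrightarrow\mathbf{g}^1$ swap forced by the ambiguous value at index $0$ when $p=0$, and evaluate with Lemmas \ref{2by2}, \ref{3by3} and \ref{AbarA}.

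The gap is in (vii) and (viii). When both the order $2n+1$ and the superscript $2p+1$ are odd, the $P_1$-conjugated matrix has all-ones diagonal blocks (since $K_{n+1}^{2p+1}=\mathbf{1}$ and $K_{n}^{2p+3}=\mathbf{1}$) and a non-square off-diagonal block $(K_{n+1}^{2p+2})^{(n+1)}$, so Lemmas \ref{2by2} and \ref{3by3} give nothing here, and no amount of ``cross-multiplying'' the even-superscript formulas at sizes $n$ and $n+1$ produces $|H_{2n+1}^{2p+1}|$: you have described the \emph{shape} of the answer (a bilinear combination of the right-hand sides of (i)--(iv)) without supplying the identity that forces it. The missing ingredient is the classical Hankel-determinant recurrence
\[
|H_{n}^{p}(\mathbf{u})|\cdot|H_{n}^{p+2}(\mathbf{u})|-|H_{n}^{p+1}(\mathbf{u})|^2=|H_{n-1}^{p+2}(\mathbf{u})|\cdot|H_{n+1}^{p}(\mathbf{u})|,
\]
which, after the substitutions $n\mapsto 2n+1$, $p\mapsto 2p$ and the reduction $x^2\equiv x\ ({\rm mod}\ 2)$, expresses $|H_{2n+1}^{2p+1}|$ through the even-superscript determinants $|H_{2n+1}^{2p}|$, $|H_{2n+1}^{2p+2}|$, $|H_{2n}^{2p+2}|$ and $|H_{2n+2}^{2p}|$ already computed in (i)--(iv); that is exactly how (vii') and (vii'') are obtained. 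For (viii') and (viii'') one needs the further trick of applying the same recurrence to the shifted sequence $\mathbf{g}^1+1$, so that Lemma \ref{AbarA}(i) converts each $|\mathbf{1}+H|$ into $|H|-|\overline{H}|$ and one can then solve for $|\overline{H_{2n+1}^{2p+1}(\mathbf{g}^1)}|$. Without these two steps the proofs of (vii'), (vii''), (viii') and (viii'') do not go through.
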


\begin{proof} For $p\geqslant 1$ we have that \begin{multline}\label{KH1} K_n^{2p}(\mathbf{g}^0)=K_n^{2p}(\mathbf{g}^1)=(g({2p+2(i+j-2)}))_{1\leqslant i,j\leqslant n}\\ =(1+ g({p+(i+j-2)}))_{1\leqslant i,j\leqslant n} =\mathbf{1}_{n\times n}+ H_n^p(\mathbf{g}^1),\end{multline} and for $p\geqslant 0$ that \begin{multline}\label{KH2} K_n^{2p+1}(\mathbf{g}^1)=(g({2p+1+2(i+j-2)}))_{1\leqslant i,j\leqslant n}\\ =(g({2(p+i+j-2)+1}))_{1\leqslant i,j\leqslant n}=\mathbf{1}_{n\times n}.\end{multline} The analogue of \eqref{KH1} for $p=0$ must take into account the difference of $\mathbf{g}^1$ and $\mathbf{g}^0$ in there first coordinate. Since $g^0(0)\equiv 1+g^1(0)\ ({\rm mod}\ 2)$ and $g^1(0)\equiv 1+g^0(0)\ ({\rm mod}\ 2)$, we have that \begin{equation}\label{KH0}K_n^0(\mathbf{g}^0)=\mathbf{1}_{n\times n}+ H_n^0(\mathbf{g}^1),\qquad\mbox{and}\qquad K_n^0(\mathbf{g}^1)=\mathbf{1}_{n\times n}+ H_n^0(\mathbf{g}^0).\end{equation}

Note that for $P_1$ defined above we have (see equation (8) of \cite{APWW1998}) that \begin{equation}\label{APWW8} P_1^tH_{2n}^p(\mathbf{u})P_1=\left(\begin{matrix}K_{n}^{p}(\mathbf{u}) &K_{n}^{p+1}(\mathbf{u})\\ K_{n}^{p+1}(\mathbf{u}) & K_{n}^{p+2}(\mathbf{u}) \end{matrix}\right),\end{equation} and \begin{align} \label{APWW9} P_1^tH_{2n+1}^p(\mathbf{u})P_1&=\left(\begin{matrix}K_{n+1}^{p}(\mathbf{u}) &(K_{n+1}^{p+1}(\mathbf{u}))^{(n+1)}\\ (K_{n+1}^{p+1}(\mathbf{u}))^{(n+1)t} & K_{n}^{p+2}(\mathbf{u}) \end{matrix}\right).
\end{align} 

To prove (i) we must now break into two cases. If $p=0$, then using \eqref{APWW8}, \eqref{KH1} and \eqref{KH2}, we have that \begin{align*} P_1^t H_{2n}^{0}(\mathbf{g}^1)P_1&=\left(\begin{matrix}K_{n}^{0}(\mathbf{g}^1) &K_{n}^{1}(\mathbf{g}^1)\\ K_{n}^{1}(\mathbf{g}^1) & K_{n}^{2}(\mathbf{g}^1) \end{matrix}\right)\\
&=\left(\begin{matrix}\mathbf{1}_{n\times n}+ H_{n}^{0}(\mathbf{g}^0) & \mathbf{1}_{n\times n}\\ \mathbf{1}_{n\times n} & \mathbf{1}_{n\times n}+ H_{n}^{1}(\mathbf{g}^1) \end{matrix}\right),
\end{align*} so that Lemma \ref{2by2} gives \begin{align*} |H_{2n}^{0}(\mathbf{g}^1)|&=|\mathbf{1}_{n\times n}+ H_{n}^{0}(\mathbf{g}^0)|\cdot|\mathbf{1}_{n\times n}+ H_{n}^{1}(\mathbf{g}^1)|\\
&\qquad\qquad\qquad-|\overline{\mathbf{1}_{n\times n}+ H_{n}^{0}(\mathbf{g}^0)}|\cdot|\overline{\mathbf{1}_{n\times n}+ H_{n}^{1}(\mathbf{g}^1)}|\\
&=|H_{n}^{0}(\mathbf{g}^0)|\cdot|H_{n}^{1}(\mathbf{g}^1)|-|\overline{H_{n}^{0}(\mathbf{g}^0)}|\cdot|H_{n}^{1}(\mathbf{g}^1)|-|H_{n}^{0}(\mathbf{g}^0)|\cdot|\overline{H_{n}^{1}(\mathbf{g}^1)}| 
\end{align*} The similar result holds for $|H_{2n}^{0}(\mathbf{g}^0)|$ by replacing $\mathbf{g}^1$ with $\mathbf{g}^0$ in the above argument. This proves~(i').

If $p\geqslant 1$, then again that using \eqref{APWW8}, \eqref{KH1} and \eqref{KH2}, we have that \begin{align*} P_1^t H_{2n}^{2p}(\mathbf{g}^1)P_1&=\left(\begin{matrix}K_{n}^{2p}(\mathbf{g}^1) &K_{n}^{2p+1}(\mathbf{g}^1)\\ K_{n}^{2p+1}(\mathbf{g}^1) & K_{n}^{2p+2}(\mathbf{g}^1) \end{matrix}\right)\\
&=\left(\begin{matrix}\mathbf{1}_{n\times n}+ H_{n}^{p}(\mathbf{g}^1) & \mathbf{1}_{n\times n}\\ \mathbf{1}_{n\times n} & \mathbf{1}_{n\times n}+ H_{n}^{p+1}(\mathbf{g}^1) \end{matrix}\right),
\end{align*} so that Lemma \ref{2by2} gives \begin{align*} |H_{2n}^{2p}(\mathbf{g}^1)|&=|\mathbf{1}_{n\times n}+ H_{n}^{p}(\mathbf{g}^1)|\cdot|\mathbf{1}_{n\times n}+ H_{n}^{p+1}(\mathbf{g}^1)|\\
&\qquad\qquad\qquad-|\overline{\mathbf{1}_{n\times n}+ H_{n}^{p}(\mathbf{g}^1)}|\cdot|\overline{\mathbf{1}_{n\times n}+ H_{n}^{p+1}(\mathbf{g}^1)}|\\
&=|H_{n}^{p}(\mathbf{g}^1)|\cdot|H_{n}^{p+1}(\mathbf{g}^1)|-|\overline{H_{n}^{p}(\mathbf{g}^1)}|\cdot|H_{n}^{p+1}(\mathbf{g}^1)|-|H_{n}^{p}(\mathbf{g}^1)|\cdot|\overline{H_{n}^{p+1}(\mathbf{g}^1)}|, 
\end{align*} which proves (i'').

For (ii'), we have that \begin{align*} \left(\begin{matrix}P_1^t & \mathbf{0}_{2n\times 1}\\ \mathbf{0}_{1\times 2n} & 1 \end{matrix}\right) &\overline{H_{2n}^{0}(\mathbf{g}^1)} \left(\begin{matrix}P_1 & \mathbf{0}_{2n\times 1}\\ \mathbf{0}_{1\times 2n} & 1 \end{matrix}\right) \\
&=  \left(\begin{matrix}P_1^t & \mathbf{0}_{2n\times 1}\\ \mathbf{0}_{1\times 2n} & 1 \end{matrix}\right) \left(\begin{matrix} H_{2n}^{0}(\mathbf{g}^1) & \mathbf{1}_{2n\times 1}\\ \mathbf{1}_{1\times 2n} & 0\end{matrix}\right) \left(\begin{matrix}P_1 & \mathbf{0}_{2n\times 1}\\ \mathbf{0}_{1\times 2n} & 1 \end{matrix}\right)\\
&=\left(\begin{matrix}P_1^t H_{2n}^{0}(\mathbf{g}^1)P_1 & P_1^t\mathbf{1}_{2n\times 1}\\ \mathbf{1}_{1\times 2n}P_1 & 0\end{matrix}\right)\\
&=\left(\begin{matrix}\mathbf{1}_{n\times n}+ H_{n}^{0}(\mathbf{g}^0) & \mathbf{1}_{n\times n} & \mathbf{1}_{n\times 1}\\ \mathbf{1}_{n\times n} & \mathbf{1}_{n\times n}+ H_{n}^{1}(\mathbf{g}^1) & \mathbf{1}_{n\times 1}\\ \mathbf{1}_{1\times n} & \mathbf{1}_{1\times n} & 0\end{matrix}\right).
\end{align*} If we consider the $1\times 1$ matrix $(0)$, then $$\overline{(0)}=\left(\begin{matrix}0 & 1\\ 1& 0\end{matrix}\right)$$ so that $|(0)|=0$ and $|\overline{(0)}|=-1$. Thus Lemma \ref{3by3} gives \begin{align*}|\overline{H_{2n}^{0}(\mathbf{g}^1)}|&=|\mathbf{1}_{n\times n}+ H_{n}^{0}(\mathbf{g}^0)|\cdot |\overline{\mathbf{1}_{n\times n}+ H_{n}^{1}(\mathbf{g}^1)}|\\
&\qquad\qquad+|\overline{\mathbf{1}_{n\times n}+ H_{n}^{0}(\mathbf{g}^0)}|\cdot |\mathbf{1}_{n\times n}+ H_{n}^{1}(\mathbf{g}^1)|\\
&\qquad\qquad\qquad+2|\overline{\mathbf{1}_{n\times n}+ H_{n}^{0}(\mathbf{g}^0)}|\cdot |\overline{\mathbf{1}_{n\times n}+ H_{n}^{1}(\mathbf{g}^1)}|\\
&=\left(|H_{n}^{0}(\mathbf{g}^0)|-|\overline{H_{n}^{0}(\mathbf{g}^0)}|\right)\cdot |\overline{H_{n}^{1}(\mathbf{g}^1)}|+|\overline{H_{n}^{0}(\mathbf{g}^0)}|\cdot\left(|H_{n}^{1}(\mathbf{g}^1)|-|\overline{H_{n}^{1}(\mathbf{g}^1)}|\right)\\
&\qquad\qquad+2  |\overline{H_{n}^{0}(\mathbf{g}^0)}|\cdot |\overline{H_{n}^{1}(\mathbf{g}^1)}|\\
&\equiv |H_{n}^{0}(\mathbf{g}^0)|\cdot |\overline{H_{n}^{1}(\mathbf{g}^1)}|+|\overline{H_{n}^{0}(\mathbf{g}^0)}|\cdot|H_{n}^{1}(\mathbf{g}^1)|.
\end{align*} The similar result holds for $|\overline{H_{2n}^{0}(\mathbf{g}^0)}|$ by replacing $\mathbf{g}^1$ with $\mathbf{g}^0$ in the above argument. This proves~(ii').

For (ii''), note that for $p\geqslant 1$ we have \begin{align*} \left(\begin{matrix}P_1^t & \mathbf{0}_{2n\times 1}\\ \mathbf{0}_{1\times 2n} & 1 \end{matrix}\right) &\overline{H_{2n}^{2p}(\mathbf{g}^1)} \left(\begin{matrix}P_1 & \mathbf{0}_{2n\times 1}\\ \mathbf{0}_{1\times 2n} & 1 \end{matrix}\right) \\
&=  \left(\begin{matrix}P_1^t & \mathbf{0}_{2n\times 1}\\ \mathbf{0}_{1\times 2n} & 1 \end{matrix}\right) \left(\begin{matrix} H_{2n}^{2p}(\mathbf{g}^1) & \mathbf{1}_{2n\times 1}\\ \mathbf{1}_{1\times 2n} & 0\end{matrix}\right) \left(\begin{matrix}P_1 & \mathbf{0}_{2n\times 1}\\ \mathbf{0}_{1\times 2n} & 1 \end{matrix}\right)\\
&=\left(\begin{matrix}P_1^t H_{2n}^{2p}(\mathbf{g}^1)P_1 & P_1^t\mathbf{1}_{2n\times 1}\\ \mathbf{1}_{1\times 2n}P_1 & 0\end{matrix}\right)\\
&=\left(\begin{matrix}\mathbf{1}_{n\times n}+ H_{n}^{p}(\mathbf{g}^1) & \mathbf{1}_{n\times n} & \mathbf{1}_{n\times 1}\\ \mathbf{1}_{n\times n} & \mathbf{1}_{n\times n}+ H_{n}^{p+1}(\mathbf{g}^1) & \mathbf{1}_{n\times 1}\\ \mathbf{1}_{1\times n} & \mathbf{1}_{1\times n} & 0\end{matrix}\right).
\end{align*} Using the above comments about $|(0)|$ and $|\overline{(0)}|$ and  Lemma \ref{3by3} gives \begin{align*}|\overline{H_{2n}^{2p}(\mathbf{g}^1)}|&=|\mathbf{1}_{n\times n}+ H_{n}^{p}(\mathbf{g}^1)|\cdot |\overline{\mathbf{1}_{n\times n}+ H_{n}^{p+1}(\mathbf{g}^1)}|\\
&\qquad\qquad+|\overline{\mathbf{1}_{n\times n}+ H_{n}^{p}(\mathbf{g}^1)}|\cdot |\mathbf{1}_{n\times n}+ H_{n}^{p+1}(\mathbf{g}^1)|\\
&\qquad\qquad\qquad+2|\overline{\mathbf{1}_{n\times n}+ H_{n}^{p}(\mathbf{g}^1)}|\cdot |\overline{\mathbf{1}_{n\times n}+ H_{n}^{p+1}(\mathbf{g}^1)}|\\
&=\left(|H_{n}^{p}(\mathbf{g}^1)|-|\overline{H_{n}^{p}(\mathbf{g}^1)}|\right)\cdot |\overline{H_{n}^{p+1}(\mathbf{g}^1)}|\\
&\qquad\qquad+|\overline{H_{n}^{p}(\mathbf{g}^1)}|\cdot\left(|H_{n}^{p+1}(\mathbf{g}^1)|-|\overline{H_{n}^{p+1}(\mathbf{g}^1)}|\right)\\
&\qquad\quad\qquad+2  |\overline{H_{n}^{p}(\mathbf{g}^1)}|\cdot |\overline{H_{n}^{p+1}(\mathbf{g}^1)}|\\
&\equiv |H_{n}^{p}(\mathbf{g}^1)|\cdot |\overline{H_{n}^{p+1}(\mathbf{g}^1)}|+|\overline{H_{n}^{p}(\mathbf{g}^1)}|\cdot|H_{n}^{p+1}(\mathbf{g}^1)|.
\end{align*}

For (iii'), note that \begin{align}\nonumber P_1^tH_{2n+1}^{0}(\mathbf{g}^1)P_1&=\left(\begin{matrix}K_{n+1}^{0}(\mathbf{g}^1) &(K_{n+1}^{1}(\mathbf{g}^1))^{(n+1)}\\ (K_{n+1}^{1}(\mathbf{g}^1))^{(n+1)t} & K_{n}^{2}(\mathbf{g}^1) \end{matrix}\right)\\
&\nonumber =\left(\begin{matrix}\mathbf{1}_{(n+1)\times (n+1)}+ H_{n+1}^{0}(\mathbf{g}^0) &(\mathbf{1}_{(n+1)\times (n+1)})^{(n+1)}\\ (\mathbf{1}_{(n+1)\times (n+1)})^{(n+1)t} & \mathbf{1}_{n\times n}+ H_{n}^{1}(\mathbf{g}^1) \end{matrix}\right)\\
&\label{0H0221}=\left(\begin{matrix}\mathbf{1}_{(n+1)\times (n+1)}+ H_{n+1}^{0}(\mathbf{g}^0) &\mathbf{1}_{(n+1)\times n}\\ \mathbf{1}_{n\times (n+1)} & \mathbf{1}_{n\times n}+ H_{n}^{1}(\mathbf{g}^1) \end{matrix}\right). 
\end{align} Thus we have \begin{align*}|H_{2n+1}^{0}(\mathbf{g}^1)|&=|\mathbf{1}_{(n+1)\times (n+1)}+ H_{n+1}^{0}(\mathbf{g}^0)|\cdot |\mathbf{1}_{n\times n}+ H_{n}^{1}(\mathbf{g}^1)|\\
&\qquad\qquad-|\overline{\mathbf{1}_{(n+1)\times (n+1)}+ H_{n+1}^{0}(\mathbf{g}^0)}|\cdot |\overline{\mathbf{1}_{n\times n}+ H_{n}^{1}(\mathbf{g}^1)}|\\
&=\left(|H_{n+1}^{0}(\mathbf{g}^0)|-|\overline{H_{n+1}^{0}(\mathbf{g}^0)}|\right)\cdot\left(|H_{n}^{1}(\mathbf{g}^1)|-|\overline{H_{n}^{1}(\mathbf{g}^1)}|\right)\\
&\qquad\qquad-|\overline{H_{n+1}^{0}(\mathbf{g}^0)}|\cdot |\overline{H_{n}^{1}(\mathbf{g}^1)}|\\
&=|H_{n+1}^{0}(\mathbf{g}^0)|\cdot |H_{n}^{1}(\mathbf{g}^1)|-|\overline{H_{n+1}^{0}(\mathbf{g}^0)}|\cdot|H_{n}^{1}(\mathbf{g}^1)|\\
&\qquad\qquad-|H_{n+1}^{0}(\mathbf{g}^0)|\cdot|\overline{H_{n}^{1}(\mathbf{g}^1)}|.
\end{align*} The similar result holds for $|\overline{H_{2n}^{0}(\mathbf{g}^0)}|$ by replacing $\mathbf{g}^1$ with $\mathbf{g}^0$ in the above argument. This proves~(iii').

For (iii''), $p\geqslant 1$ so that \begin{align}\nonumber P_1^tH_{2n+1}^{2p}(\mathbf{g}^1)P_1&=\left(\begin{matrix}K_{n+1}^{2p}(\mathbf{g}^1) &(K_{n+1}^{2p+1}(\mathbf{g}^1))^{(n+1)}\\ (K_{n+1}^{2p+1}(\mathbf{g}^1))^{(n+1)t} & K_{n}^{2p+2}(\mathbf{g}^1) \end{matrix}\right)\\
&\nonumber =\left(\begin{matrix}\mathbf{1}_{(n+1)\times (n+1)}+ H_{n+1}^{p}(\mathbf{g}^1) &(\mathbf{1}_{(n+1)\times (n+1)})^{(n+1)}\\ (\mathbf{1}_{(n+1)\times (n+1)})^{(n+1)t} & \mathbf{1}_{n\times n}+ H_{n}^{p+1}(\mathbf{g}^1) \end{matrix}\right)\\
&\label{PHP221}=\left(\begin{matrix}\mathbf{1}_{(n+1)\times (n+1)}+ H_{n+1}^{p}(\mathbf{g}^1) &\mathbf{1}_{(n+1)\times n}\\ \mathbf{1}_{n\times (n+1)} & \mathbf{1}_{n\times n}+ H_{n}^{p+1}(\mathbf{g}^1) \end{matrix}\right). 
\end{align} Thus we have \begin{align*}|H_{2n+1}^{2p}(\mathbf{g}^1)|&=|\mathbf{1}_{(n+1)\times (n+1)}+ H_{n+1}^{p}(\mathbf{g}^1)|\cdot |\mathbf{1}_{n\times n}+ H_{n}^{p+1}(\mathbf{g}^1)|\\
&\qquad\qquad-|\overline{\mathbf{1}_{(n+1)\times (n+1)}+ H_{n+1}^{p}(\mathbf{g}^1)}|\cdot |\overline{\mathbf{1}_{n\times n}+ H_{n}^{p+1}(\mathbf{g}^1)}|\\
&=\left(|H_{n+1}^{p}(\mathbf{g}^1)|-|\overline{H_{n+1}^{p}(\mathbf{g}^1)}|\right)\cdot\left(|H_{n}^{p+1}(\mathbf{g}^1)|-|\overline{H_{n}^{p+1}(\mathbf{g}^1)}|\right)\\
&\qquad\qquad-|\overline{H_{n+1}^{p}(\mathbf{g}^1)}|\cdot |\overline{H_{n}^{p+1}(\mathbf{g}^1)}|\\
&=|H_{n+1}^{p}(\mathbf{g}^1)|\cdot |H_{n}^{p+1}(\mathbf{g}^1)|-|\overline{H_{n+1}^{p}(\mathbf{g}^1)}|\cdot|H_{n}^{p+1}(\mathbf{g}^1)|\\
&\qquad\qquad-|H_{n+1}^{p}(\mathbf{g}^1)|\cdot|\overline{H_{n}^{p+1}(\mathbf{g}^1)}|.
\end{align*}

For (iv'), similar to (ii') we have \begin{align*} &\left(\begin{matrix}P_1^t & \mathbf{0}_{(2n+1)\times 1}\\ \mathbf{0}_{1\times (2n+1)} & 1 \end{matrix}\right) \overline{H_{2n+1}^{0}(\mathbf{g}^1)} \left(\begin{matrix}P_1 & \mathbf{0}_{(2n+1)\times 1}\\ \mathbf{0}_{1\times (2n+1)} & 1 \end{matrix}\right) \\
&\qquad=  \left(\begin{matrix}P_1^t & \mathbf{0}_{(2n+1)\times 1}\\ \mathbf{0}_{1\times (2n+1)} & 1 \end{matrix}\right) \left(\begin{matrix} H_{2n+1}^{0}(\mathbf{g}^1) & \mathbf{1}_{(2n+1)\times 1}\\ \mathbf{1}_{1\times (2n+1)} & 0\end{matrix}\right) \left(\begin{matrix}P_1 & \mathbf{0}_{(2n+1)\times 1}\\ \mathbf{0}_{1\times (2n+1)} & 1 \end{matrix}\right)\\
&\qquad=\left(\begin{matrix}P_1^t H_{2n+1}^{0}(\mathbf{g}^1)P_1 & P_1^t\mathbf{1}_{(2n+1)\times 1}\\ \mathbf{1}_{1\times (2n+1)}P_1 & 0\end{matrix}\right).
\end{align*} Thus using \eqref{0H0221} we have 
\begin{align*} &\left(\begin{matrix}P_1^t & \mathbf{0}_{(2n+1)\times 1}\\ \mathbf{0}_{1\times (2n+1)} & 1 \end{matrix}\right) \overline{H_{2n+1}^{0}(\mathbf{g}^1)} \left(\begin{matrix}P_1 & \mathbf{0}_{(2n+1)\times 1}\\ \mathbf{0}_{1\times (2n+1)} & 1 \end{matrix}\right) \\
&\qquad\qquad\qquad=\left(\begin{matrix}\mathbf{1}_{(n+1)\times (n+1)}+ H_{n+1}^{0}(\mathbf{g}^0) & \mathbf{1}_{(n+1)\times n} & \mathbf{1}_{(n+1)\times 1}\\ \mathbf{1}_{n\times (n+1)} & \mathbf{1}_{n\times n}+ H_{n}^{1}(\mathbf{g}^1) & \mathbf{1}_{n\times 1}\\ \mathbf{1}_{1\times (n+1)} & \mathbf{1}_{1\times n} & 0\end{matrix}\right).
\end{align*} Just as before, we have $|(0)|=0$ and $|\overline{(0)}|=-1$, and so again using Lemma \ref{3by3} we have \begin{align*}|\overline{H_{2n+1}^{0}(\mathbf{g}^1)}|
&=|\mathbf{1}_{(n+1)\times (n+1)}+ H_{n+1}^{0}(\mathbf{g}^0)|\cdot |\overline{\mathbf{1}_{n\times n}+ H_{n}^{1}(\mathbf{g}^1)}|\\
&\qquad+|\overline{\mathbf{1}_{(n+1)\times (n+1)}+ H_{n+1}^{0}(\mathbf{g}^0)}|\cdot |\mathbf{1}_{n\times n}+ H_{n}^{1}(\mathbf{g}^1)|\\
&\qquad\qquad+2|\overline{\mathbf{1}_{(n+1)\times (n+1)}+ H_{n+1}^{0}(\mathbf{g}^0)}|\cdot |\overline{\mathbf{1}_{n\times n}+ H_{n}^{1}(\mathbf{g}^1)}|\\
&=\left(|H_{n+1}^{0}(\mathbf{g}^0)|-|\overline{H_{n+1}^{0}(\mathbf{g}^0)}|\right)\cdot |\overline{H_{n}^{1}(\mathbf{g}^1)}|\\
&\qquad+|\overline{H_{n+1}^{0}(\mathbf{g}^0)}|\cdot\left(|H_{n}^{1}(\mathbf{g}^1)|-|\overline{H_{n}^{1}(\mathbf{g}^1)}|\right)+2  |\overline{H_{n+1}^{0}(\mathbf{g}^0)}|\cdot |\overline{H_{n}^{1}(\mathbf{g}^1)}|\\
&\equiv |H_{n+1}^{0}(\mathbf{g}^0)|\cdot |\overline{H_{n}^{1}(\mathbf{g}^1)}|+|\overline{H_{n+1}^{0}(\mathbf{g}^0)}|\cdot|H_{n}^{1}(\mathbf{g}^1)|.
\end{align*} The similar result holds for $|\overline{H_{2n}^{0}(\mathbf{g}^0)}|$ by replacing $\mathbf{g}^1$ with $\mathbf{g}^0$ in the above argument. This proves~(iv').

For (iv''), similar to (ii'') we have \begin{align*} &\left(\begin{matrix}P_1^t & \mathbf{0}_{(2n+1)\times 1}\\ \mathbf{0}_{1\times (2n+1)} & 1 \end{matrix}\right) \overline{H_{2n+1}^{2p}(\mathbf{g}^1)} \left(\begin{matrix}P_1 & \mathbf{0}_{(2n+1)\times 1}\\ \mathbf{0}_{1\times (2n+1)} & 1 \end{matrix}\right) \\
&\qquad=  \left(\begin{matrix}P_1^t & \mathbf{0}_{(2n+1)\times 1}\\ \mathbf{0}_{1\times (2n+1)} & 1 \end{matrix}\right) \left(\begin{matrix} H_{2n+1}^{2p}(\mathbf{g}^1) & \mathbf{1}_{(2n+1)\times 1}\\ \mathbf{1}_{1\times (2n+1)} & 0\end{matrix}\right) \left(\begin{matrix}P_1 & \mathbf{0}_{(2n+1)\times 1}\\ \mathbf{0}_{1\times (2n+1)} & 1 \end{matrix}\right)\\
&\qquad=\left(\begin{matrix}P_1^t H_{2n+1}^{2p}(\mathbf{g}^1)P_1 & P_1^t\mathbf{1}_{(2n+1)\times 1}\\ \mathbf{1}_{1\times (2n+1)}P_1 & 0\end{matrix}\right).
\end{align*} Thus using \eqref{PHP221} we have 
\begin{align*} &\left(\begin{matrix}P_1^t & \mathbf{0}_{(2n+1)\times 1}\\ \mathbf{0}_{1\times (2n+1)} & 1 \end{matrix}\right) \overline{H_{2n+1}^{2p}(\mathbf{g}^1)} \left(\begin{matrix}P_1 & \mathbf{0}_{(2n+1)\times 1}\\ \mathbf{0}_{1\times (2n+1)} & 1 \end{matrix}\right) \\
&\qquad\qquad\qquad=\left(\begin{matrix}\mathbf{1}_{(n+1)\times (n+1)}+ H_{n+1}^{p}(\mathbf{g}^1) & \mathbf{1}_{(n+1)\times n} & \mathbf{1}_{(n+1)\times 1}\\ \mathbf{1}_{n\times (n+1)} & \mathbf{1}_{n\times n}+ H_{n}^{p+1}(\mathbf{g}^1) & \mathbf{1}_{n\times 1}\\ \mathbf{1}_{1\times (n+1)} & \mathbf{1}_{1\times n} & 0\end{matrix}\right).
\end{align*} Just as before, we have $|(0)|=0$ and $|\overline{(0)}|=-1$, and so again by one of the above lemmas \begin{align*}|\overline{H_{2n+1}^{2p}(\mathbf{g}^1)}|
&=|\mathbf{1}_{(n+1)\times (n+1)}+ H_{n+1}^{p}(\mathbf{g}^1)|\cdot |\overline{\mathbf{1}_{n\times n}+ H_{n}^{p+1}(\mathbf{g}^1)}|\\
&\qquad+|\overline{\mathbf{1}_{(n+1)\times (n+1)}+ H_{n+1}^{p}(\mathbf{g}^1)}|\cdot |\mathbf{1}_{n\times n}+ H_{n}^{p+1}(\mathbf{g}^1)|\\
&\qquad\qquad+2|\overline{\mathbf{1}_{(n+1)\times (n+1)}+ H_{n+1}^{p}(\mathbf{g}^1)}|\cdot |\overline{\mathbf{1}_{n\times n}+ H_{n}^{p+1}(\mathbf{g}^1)}|\\
&=\left(|H_{n+1}^{p}(\mathbf{g}^1)|-|\overline{H_{n+1}^{p}(\mathbf{g}^1)}|\right)\cdot |\overline{H_{n}^{p+1}(\mathbf{g}^1)}|\\
&\qquad+|\overline{H_{n+1}^{p}(\mathbf{g}^1)}|\cdot\left(|H_{n}^{p+1}(\mathbf{g}^1)|-|\overline{H_{n}^{p+1}(\mathbf{g}^1)}|\right)\\
&\qquad\qquad+2  |\overline{H_{n+1}^{p}(\mathbf{g}^1)}|\cdot |\overline{H_{n}^{p+1}(\mathbf{g}^1)}|\\
&\equiv |H_{n+1}^{p}(\mathbf{g}^1)|\cdot |\overline{H_{n}^{p+1}(\mathbf{g}^1)}|+|\overline{H_{n+1}^{p}(\mathbf{g}^1)}|\cdot|H_{n}^{p+1}(\mathbf{g}^1)|.
\end{align*}

For (v), we have \begin{align*} \left(\begin{matrix} \mathbf{0}_{n\times n} & I_n\\ I_n &\mathbf{0}_{n\times n}\end{matrix}\right)P_1^tH_{2n}^{2p+1}(\mathbf{g}^1)P_1&=\left(\begin{matrix} K_{n}^{2p+2}(\mathbf{g}^1) & K_{n}^{2p+3}(\mathbf{g}^1)\\ K_{n}^{2p+1}(\mathbf{g}^1) &K_{n}^{2p+2}(\mathbf{g}^1) \end{matrix}\right)\\
&=\left(\begin{matrix} \mathbf{1}_{n\times n}+ H_{n}^{p+1}(\mathbf{g}^1) & \mathbf{1}_{n\times n}\\ \mathbf{1}_{n\times n} & \mathbf{1}_{n\times n}+ H_{n}^{p+1}(\mathbf{g}^1)\end{matrix}\right),
\end{align*} and so \begin{align*}|H_{2n}^{2p+1}(\mathbf{g}^1)|&=|\overline{\mathbf{1}_{n\times n}+ H_{n}^{p+1}(\mathbf{g}^1)}|^2-|\mathbf{1}_{n\times n}+ H_{n}^{p+1}(\mathbf{g}^1)|^2\\
&=|\overline{H_{n}^{p+1}(\mathbf{g}^1)}|^2-\left(|H_{n}^{p+1}(\mathbf{g}^1)|-|\overline{H_{n}^{p+1}(\mathbf{g}^1)}|\right)^2\\
&=-|H_{n}^{p+1}(\mathbf{g}^1)|^2+2|H_{n}^{p+1}(\mathbf{g}^1)|\cdot|\overline{H_{n}^{p+1}(\mathbf{g}^1)}|\\
&\equiv |H_{n}^{p+1}(\mathbf{g}^1)|.
\end{align*}

For (vi), similar to (ii) we have using \eqref{APWW8} that \begin{align*} \left(\begin{matrix}P_1^t & \mathbf{0}_{2n\times 1}\\ \mathbf{0}_{1\times 2n} & 1 \end{matrix}\right) &\overline{H_{2n}^{2p+1}(\mathbf{g}^1)} \left(\begin{matrix}P_1 & \mathbf{0}_{2n\times 1}\\ \mathbf{0}_{1\times 2n} & 1 \end{matrix}\right) \\
&=  \left(\begin{matrix}P_1^t & \mathbf{0}_{2n\times 1}\\ \mathbf{0}_{1\times 2n} & 1 \end{matrix}\right) \left(\begin{matrix} H_{2n}^{2p+1}(\mathbf{g}^1) & \mathbf{1}_{2n\times 1}\\ \mathbf{1}_{1\times 2n} & 0\end{matrix}\right) \left(\begin{matrix}P_1 & \mathbf{0}_{2n\times 1}\\ \mathbf{0}_{1\times 2n} & 1 \end{matrix}\right)\\
&=\left(\begin{matrix}P_1^t H_{2n}^{2p+1}(\mathbf{g}^1)P_1 & P_1^t\mathbf{1}_{2n\times 1}\\ \mathbf{1}_{1\times 2n}P_1 & 0\end{matrix}\right)\\
&=\left(\begin{matrix}\mathbf{1}_{n\times n} &\mathbf{1}_{n\times n}+ H_{n}^{p+1}(\mathbf{g}^1) &  \mathbf{1}_{n\times 1}\\ \mathbf{1}_{n\times n}+ H_{n}^{p+1}(\mathbf{g}^1) & \mathbf{1}_{n\times n} & \mathbf{1}_{n\times 1}\\ \mathbf{1}_{1\times n} & \mathbf{1}_{1\times n} & 0\end{matrix}\right).
\end{align*} Thus we have that \begin{align*}\left(\begin{matrix} \mathbf{0}_{n\times n} & I_n & \mathbf{0}_{n\times 1}\\
I_n &\mathbf{0}_{n\times n} & \mathbf{0}_{n\times 1}\\
\mathbf{0}_{1\times n} & \mathbf{0}_{1\times n} & 1\end{matrix}\right)&\left(\begin{matrix}P_1^t & \mathbf{0}_{2n\times 1}\\ \mathbf{0}_{1\times 2n} & 1 \end{matrix}\right) \overline{H_{2n}^{2p+1}(\mathbf{g}^1)} \left(\begin{matrix}P_1 & \mathbf{0}_{2n\times 1}\\ \mathbf{0}_{1\times 2n} & 1 \end{matrix}\right) \\
&=\left(\begin{matrix}\mathbf{1}_{n\times n}+ H_{n}^{p+1}(\mathbf{g}^1) & \mathbf{1}_{n\times n} &  \mathbf{1}_{n\times 1}\\  \mathbf{1}_{n\times n} &\mathbf{1}_{n\times n}+ H_{n}^{p+1}(\mathbf{g}^1) & \mathbf{1}_{n\times 1}\\ \mathbf{1}_{1\times n} & \mathbf{1}_{1\times n} & 0\end{matrix}\right).
 \end{align*} Since $$\left|\begin{matrix} \mathbf{0}_{n\times n} & I_n & \mathbf{0}_{n\times 1}\\
I_n &\mathbf{0}_{n\times n} & \mathbf{0}_{n\times 1}\\
\mathbf{0}_{1\times n} & \mathbf{0}_{1\times n} & 1\end{matrix}\right|=-1,$$ this gives, applying Lemma (3 by 3), that $$|\overline{H_{2n}^{2p+1}(\mathbf{g}^1)}|=-2|\mathbf{1}_{n\times n}+ H_{n}^{p+1}(\mathbf{g}^1)|\cdot|\overline{\mathbf{1}_{n\times n}+ H_{n}^{p+1}(\mathbf{g}^1)}|-2|\overline{\mathbf{1}_{n\times n}+ H_{n}^{p+1}(\mathbf{g}^1)}|^2\equiv 0.$$

For (vii') and (vii'') we will use the well--known (see \cite[Remark 2.1]{APWW1998} or \cite[Page 96]{B1980}) recurrence \begin{equation}\label{alice}|H_{n}^{p}(\mathbf{u})|\cdot|H_{n}^{p+2}(\mathbf{u})|-|H_{n}^{p+1}(\mathbf{u})|^2=|H_{n-1}^{p+2}(\mathbf{u})|\cdot|H_{n+1}^{p}(\mathbf{u})|,\end{equation} with $p\mapsto 2p$, $n\mapsto 2n+1$ and $\mathbf{u}=\mathbf{g}^1$, to get $$|H_{2n+1}^{2p}(\mathbf{g}^1)|\cdot|H_{2n+1}^{2(p+1)}(\mathbf{g}^1)|-|H_{2n+1}^{2p+1}(\mathbf{g}^1)|^2=|H_{2n}^{2(p+1)}(\mathbf{g}^1)|\cdot|H_{2(n+1)}^{2p}(\mathbf{g}^1)|.$$ Solving for $|H_{2n+1}^{2p+1}(\mathbf{g}^1)|$ and remembering that we are always taking everything modulo $2$, this gives $$|H_{2n+1}^{2p+1}(\mathbf{g}^1)|\equiv |H_{2n+1}^{2p}(\mathbf{g}^1)|\cdot|H_{2n+1}^{2(p+1)}(\mathbf{g}^1)|-|H_{2n}^{2(p+1)}(\mathbf{g}^1)|\cdot|H_{2(n+1)}^{2p}(\mathbf{g}^1)|.$$ We now must differentiate between the cases $p=0$ and $p\geqslant 1$. 

When $p=0$, substituting in the identities of parts (iii'') and (ii''), we have \begin{align*}|H_{2n+1}^{1}(\mathbf{g}^1)|
&\equiv \Big[\Big(|H_{n+1}^{0}(\mathbf{g}^0)|\cdot|H_{n}^{1}(\mathbf{g}^1)|-|\overline{H_{n+1}^{0}(\mathbf{g}^0)}|\cdot|H_{n}^{1}(\mathbf{g}^1)|\\
&\qquad\qquad\qquad\qquad-|H_{n+1}^{0}(\mathbf{g}^0)|\cdot|\overline{H_{n}^{1}(\mathbf{g}^1)}|\Big)\\
&\times\Big(|H_{n+1}^{1}(\mathbf{g}^1)|\cdot|H_{n}^{2}(\mathbf{g}^1)|-|\overline{H_{n+1}^{1}(\mathbf{g}^1)}|\cdot|H_{n}^{2}(\mathbf{g}^1)|\\
&\qquad\qquad\qquad\qquad-|H_{n+1}^{1}(\mathbf{g}^1)|\cdot|\overline{H_{n}^{2}(\mathbf{g}^1)}|\Big)\Big]\\
&-\Big[\Big(|H_{n}^{1}(\mathbf{g}^1)|\cdot|H_{n}^{2}(\mathbf{g}^1)|-|\overline{H_{n}^{1}(\mathbf{g}^1)}|\cdot|H_{n}^{2}(\mathbf{g}^1)|-|H_{n}^{1}(\mathbf{g}^1)|\cdot|\overline{H_{n}^{2}(\mathbf{g}^1)}|\Big)\\
&\times\Big(|H_{n+1}^{0}(\mathbf{g}^0)|\cdot|H_{n+1}^{1}(\mathbf{g}^1)|-|\overline{H_{n+1}^{0}(\mathbf{g}^0)}|\cdot|H_{n+1}^{1}(\mathbf{g}^1)|\\
&\qquad\qquad\qquad\qquad-|H_{n+1}^{0}(\mathbf{g}^0)|\cdot|\overline{H_{n+1}^{1}(\mathbf{g}^1)}|\Big)\Big].
\end{align*} The similar result holds for $|H_{2n+1}^{1}(\mathbf{g}^0)|$ by replacing $\mathbf{g}^1$ with $\mathbf{g}^0$ in the above argument. This proves~(vii').

For $p\geqslant 1$, substituting in the identities of parts (iii'') and (ii'') and simplifying, this becomes \begin{align*}|H_{2n+1}^{2p+1}(\mathbf{g}^1)|
&\equiv \Big[\Big(|H_{n+1}^{p}(\mathbf{g}^1)|\cdot|H_{n}^{p+1}(\mathbf{g}^1)|-|\overline{H_{n+1}^{p}(\mathbf{g}^1)}|\cdot|H_{n}^{p+1}(\mathbf{g}^1)|\\
&\qquad\qquad\qquad\qquad-|H_{n+1}^{p}(\mathbf{g}^1)|\cdot|\overline{H_{n}^{p+1}(\mathbf{g}^1)}|\Big)\\
&\times\Big(|H_{n+1}^{p+1}(\mathbf{g}^1)|\cdot|H_{n}^{p+2}(\mathbf{g}^1)|-|\overline{H_{n+1}^{p+1}(\mathbf{g}^1)}|\cdot|H_{n}^{p+2}(\mathbf{g}^1)|\\
&\qquad\qquad\qquad\qquad-|H_{n+1}^{p+1}(\mathbf{g}^1)|\cdot|\overline{H_{n}^{p+2}(\mathbf{g}^1)}|\Big)\Big]\\
&-\Big[\Big(|H_{n}^{p+1}(\mathbf{g}^1)|\cdot|H_{n}^{p+2}(\mathbf{g}^1)|-|\overline{H_{n}^{p+1}(\mathbf{g}^1)}|\cdot|H_{n}^{p+2}(\mathbf{g}^1)|\\
&\qquad\qquad\qquad\qquad-|H_{n}^{p+1}(\mathbf{g}^1)|\cdot|\overline{H_{n}^{p+2}(\mathbf{g}^1)}|\Big)\\
&\times\Big(|H_{n+1}^{p}(\mathbf{g}^1)|\cdot|H_{n+1}^{p+1}(\mathbf{g}^1)|-|\overline{H_{n+1}^{p}(\mathbf{g}^1)}|\cdot|H_{n+1}^{p+1}(\mathbf{g}^1)|\\
&\qquad\qquad\qquad\qquad-|H_{n+1}^{p}(\mathbf{g}^1)|\cdot|\overline{H_{n+1}^{p+1}(\mathbf{g}^1)}|\Big)\Big]\\
&=\Big(|\overline{H_{n+1}^{p}(\mathbf{g}^1)}|\cdot|H_{n}^{p+2}(\mathbf{g}^1)|-|H_{n+1}^{p}(\mathbf{g}^1)|\cdot|\overline{H_{n}^{p+2}(\mathbf{g}^1)}|\Big)\\
&\qquad\qquad\times\Big(|H_{n}^{p+1}(\mathbf{g}^1)|\cdot|\overline{H_{n+1}^{p+1}(\mathbf{g}^1)}|-|\overline{H_{n}^{p+1}(\mathbf{g}^1)}|\cdot|H_{n+1}^{p+1}(\mathbf{g}^1)|\Big).
\end{align*} This proves (vii'').

For (viii') and (viii''), we note that for all $n,p$ and $\mathbf{u}$ we have, denoting $\mathbf{u}+1=\{u(n)+1\}_{n\geqslant 0}$, that $$H_n^p(\mathbf{u}+1)=\mathbf{1}_{n\times n}+H_n^p(\mathbf{u}).$$ Applying \eqref{alice} for the sequence $\mathbf{g}^1+1$ and doing everything modulo $2$, we have that \begin{multline*} |\mathbf{1}_{n\times n}+H_{n}^{p+1}(\mathbf{g}^1)|\equiv |\mathbf{1}_{n\times n}+H_{n}^{p}(\mathbf{g}^1)|\cdot|\mathbf{1}_{n\times n}+H_{n}^{p+2}(\mathbf{g}^1)|\\ +|\mathbf{1}_{(n-1)\times(n-1)}+H_{n-1}^{p+2}(\mathbf{g}^1)|\cdot|\mathbf{1}_{(n+1)\times(n+1)}+H_{n+1}^{p}(\mathbf{g}^1)|.\end{multline*} Now sending $p\mapsto 2p$ and $n\mapsto 2n+1$ yields \begin{multline*} |\mathbf{1}_{(2n+1)\times (2n+1)}+H_{2n+1}^{2p+1}(\mathbf{g}^1)|\\ \equiv |\mathbf{1}_{(2n+1)\times (2n+1)}+H_{2n+1}^{2p}(\mathbf{g}^1)|\cdot|\mathbf{1}_{(2n+1)\times (2n+1)}+H_{2n+1}^{2(p+1)}(\mathbf{g}^1)|\\ +|\mathbf{1}_{2n \times 2n}+H_{2n}^{2(p+1)}(\mathbf{g}^1)|\cdot|\mathbf{1}_{2(n+2)\times 2(n+2)}+H_{2(n+2)}^{2p}(\mathbf{g}^1)|.\end{multline*} Applying the identity from Lemma \ref{AbarA}(i) and solving for $|\overline{H_{2n+1}^{2p+1}(\mathbf{g}^1)}|,$ gives \begin{multline*} |\overline{H_{2n+1}^{2p+1}(\mathbf{g}^1)}| \equiv |H_{2n+1}^{2p+1}(\mathbf{g}^1)|\\ +\left(|H_{2n+1}^{2p}(\mathbf{g}^1)|+|\overline{H_{2n+1}^{2p}(\mathbf{g}^1)|}\right)\cdot\left(|H_{2n+1}^{2(p+1)}(\mathbf{g}^1)|+|\overline{H_{2n+1}^{2(p+1)}(\mathbf{g}^1)}|\right)\\ +\left(|H_{2n}^{2(p+1)}(\mathbf{g}^1)|+|\overline{H_{2n}^{2(p+1)}(\mathbf{g}^1)}|\right)\cdot\left(|H_{2(n+2)}^{2p}(\mathbf{g}^1)|+|\overline{H_{2(n+2)}^{2p}(\mathbf{g}^1)}|\right).\end{multline*} Now applying the results we have just proven from (i''), (ii''), (iii''), (iv''), and (vii'') we have that 
\begin{align}\label{barH2121} |\overline{H_{2n+1}^{2p+1}(\mathbf{g}^1)}| &\equiv \left(|H_{n}^{p+2}(\mathbf{g}^1)|\cdot|\overline{H_{n+1}^{p}(\mathbf{g}^1)}|-|H_{n+1}^{p}(\mathbf{g}^1)|\cdot|\overline{H_{n}^{p+2}(\mathbf{g}^1)}|\right)\\
\nonumber &\qquad\times\left(|H_{n}^{p+1}(\mathbf{g}^1)|\cdot|\overline{H_{n+1}^{p+1}(\mathbf{g}^1)}|-|H_{n+1}^{p+1}(\mathbf{g}^1)|\cdot|\overline{H_{n}^{p+1}(\mathbf{g}^1)}|\right)\\
\nonumber &\qquad+\left(|H_{2n+1}^{2p}(\mathbf{g}^1)|+|\overline{H_{2n+1}^{2p}(\mathbf{g}^1)}|\right)\cdot\left(|H_{n+1}^{p+1}(\mathbf{g}^1)|\cdot |H_{n}^{p+2}(\mathbf{g}^1)|\right)\\ 
\nonumber &\qquad+\left(|H_{n}^{p+1}(\mathbf{g}^1)|\cdot |H_{n}^{p+2}(\mathbf{g}^1)|\right)\cdot\left(|H_{2(n+2)}^{2p}(\mathbf{g}^1)|+|\overline{H_{2(n+2)}^{2p}(\mathbf{g}^1)}|\right).\end{align} We note differentiate between $p=0$ and $p\geqslant 1$.

If $p=0$, we apply (i'), (ii'), (iii'), and (iv') to equivalence \eqref{barH2121} to get 
\begin{align*} |\overline{H_{2n+1}^{1}(\mathbf{g}^1)}| &\equiv \left(|H_{n}^{2}(\mathbf{g}^1)|\cdot|\overline{H_{n+1}^{0}(\mathbf{g}^1)}|-|H_{n+1}^{0}(\mathbf{g}^1)|\cdot|\overline{H_{n}^{2}(\mathbf{g}^1)}|\right)\\
&\qquad\times\left(|H_{n}^{1}(\mathbf{g}^1)|\cdot|\overline{H_{n+1}^{1}(\mathbf{g}^1)}|-|H_{n+1}^{1}(\mathbf{g}^1)|\cdot|\overline{H_{n}^{1}(\mathbf{g}^1)}|\right)\\
&\qquad+|H_{n+1}^{0}(\mathbf{g}^0)|\cdot|H_{n}^{1}(\mathbf{g}^1)|\cdot|H_{n+1}^{1}(\mathbf{g}^1)|\cdot |H_{n}^{2}(\mathbf{g}^1)|\\ 
&\qquad+|H_{n}^{1}(\mathbf{g}^1)|\cdot |H_{n}^{2}(\mathbf{g}^1)|\cdot|H_{n+2}^{0}(\mathbf{g}^0)|\cdot|\overline{H_{n+2}^{1}(\mathbf{g}^1)}|,\end{align*} which proves (viii').

If $p\geqslant 1$, we apply (i''), (ii''), (iii''), and (iv'') to equivalence \eqref{barH2121} to get
\begin{align*} |\overline{H_{2n+1}^{2p+1}(\mathbf{g}^1)}| &\equiv \left(|H_{n}^{p+2}(\mathbf{g}^1)|\cdot|\overline{H_{n+1}^{p}(\mathbf{g}^1)}|-|H_{n+1}^{p}(\mathbf{g}^1)|\cdot|\overline{H_{n}^{p+2}(\mathbf{g}^1)}|\right)\\
&\qquad\times\left(|H_{n}^{p+1}(\mathbf{g}^1)|\cdot|\overline{H_{n+1}^{p+1}(\mathbf{g}^1)}|-|H_{n+1}^{p+1}(\mathbf{g}^1)|\cdot|\overline{H_{n}^{p+1}(\mathbf{g}^1)}|\right)\\
&\qquad+|H_{n+1}^{p}(\mathbf{g}^1)|\cdot|H_{n}^{p+1}(\mathbf{g}^1)|\cdot|H_{n+1}^{p+1}(\mathbf{g}^1)|\cdot |H_{n}^{p+2}(\mathbf{g}^1)|\\ 
&\qquad+|H_{n}^{p+1}(\mathbf{g}^1)|\cdot |H_{n}^{p+2}(\mathbf{g}^1)|\cdot|H_{n+2}^{p}(\mathbf{g}^1)|\cdot|H_{n+2}^{p+1}(\mathbf{g}^1)|,\end{align*} which proves (viii'') and completes the proof of the lemma. 
\end{proof}

We have the following corollary. Note that we have made enumerated the parts of the corollary to coincide with the enumeration of Lemma \ref{mainlemma}.

\begin{corollary}\label{012} For all $n\geqslant 1$, we have
\begin{quote} 
\begin{enumerate}
\item[(i')] $|H_{2n}^{0}(\mathbf{g}^1)|\equiv |H_{n}^{0}(\mathbf{g}^0)|\cdot|H_{n}^{1}(\mathbf{g}^1)|-|\overline{H_{n}^{0}(\mathbf{g}^0)}|\cdot|H_{n}^{1}(\mathbf{g}^1)|-|H_{n}^{0}(\mathbf{g}^0)|\cdot|\overline{H_{n}^{1}(\mathbf{g}^1)}|,$
\item[] $|H_{2n}^{0}(\mathbf{g}^0)|\equiv |H_{n}^{0}(\mathbf{g}^1)|\cdot|H_{n}^{1}(\mathbf{g}^1)|-|\overline{H_{n}^{0}(\mathbf{g}^1)}|\cdot|H_{n}^{1}(\mathbf{g}^1)|-|H_{n}^{0}(\mathbf{g}^1)|\cdot|\overline{H_{n}^{1}(\mathbf{g}^1)}|,$
\vspace{.1cm}
\item[(i'')] $|H_{2n}^{2}(\mathbf{g}^1)|\equiv |H_{n}^{1}(\mathbf{g}^1)|\cdot|H_{n}^{2}(\mathbf{g}^1)|-|\overline{H_{n}^{1}(\mathbf{g}^1)}|\cdot|H_{n}^{2}(\mathbf{g}^1)|-|H_{n}^{1}(\mathbf{g}^1)|\cdot|\overline{H_{n}^{2}(\mathbf{g}^1)}|,$
\vspace{.1cm}
\item[(ii')] $|\overline{H_{2n}^{0}(\mathbf{g}^1)}|\equiv |H_{n}^{0}(\mathbf{g}^0)|\cdot |\overline{H_{n}^{1}(\mathbf{g}^1)}|+|\overline{H_{n}^{0}(\mathbf{g}^0)}|\cdot|H_{n}^{1}(\mathbf{g}^1)|,$
\item[] $|\overline{H_{2n}^{0}(\mathbf{g}^0)}|\equiv |H_{n}^{0}(\mathbf{g}^1)|\cdot |\overline{H_{n}^{1}(\mathbf{g}^1)}|+|\overline{H_{n}^{0}(\mathbf{g}^1)}|\cdot|H_{n}^{1}(\mathbf{g}^1)|,$
\vspace{.2cm}
\item[(ii'')] $|\overline{H_{2n}^{2}(\mathbf{g}^1)}|\equiv |H_{n}^{1}(\mathbf{g}^1)|\cdot |\overline{H_{n}^{2}(\mathbf{g}^1)}|+|\overline{H_{n}^{1}(\mathbf{g}^1)}|\cdot|H_{n}^{2}(\mathbf{g}^1)|,$
\vspace{.2cm}
\item[(iii')] $|H_{2n+1}^{0}(\mathbf{g}^1)|\equiv |H_{n+1}^{0}(\mathbf{g}^0)|\cdot |H_{n}^{1}(\mathbf{g}^1)|-|\overline{H_{n+1}^{0}(\mathbf{g}^0)}|\cdot|H_{n}^{1}(\mathbf{g}^1)|$
\item[] $\qquad\qquad\qquad\qquad-|H_{n+1}^{0}(\mathbf{g}^0)|\cdot|\overline{H_{n}^{1}(\mathbf{g}^1)}|$
\item[] $|H_{2n+1}^{0}(\mathbf{g}^0)|=|H_{n+1}^{0}(\mathbf{g}^1)|\cdot |H_{n}^{1}(\mathbf{g}^1)|-|\overline{H_{n+1}^{0}(\mathbf{g}^1)}|\cdot|H_{n}^{1}(\mathbf{g}^1)|$
\item[] $\qquad\qquad\qquad\qquad-|H_{n+1}^{0}(\mathbf{g}^1)|\cdot|\overline{H_{n}^{1}(\mathbf{g}^1)}|$
\vspace{.2cm}
\item[(iii'')] $|H_{2n+1}^{2}(\mathbf{g}^1)|\equiv |H_{n+1}^{1}(\mathbf{g}^1)|\cdot |H_{n}^{2}(\mathbf{g}^1)|-|\overline{H_{n+1}^{1}(\mathbf{g}^1)}|\cdot|H_{n}^{2}(\mathbf{g}^1)|$
\item[] $\qquad\qquad\qquad\qquad-|H_{n+1}^{1}(\mathbf{g}^1)|\cdot|\overline{H_{n}^{2}(\mathbf{g}^1)}|,$
\vspace{.2cm}
\item[(iv')] $|\overline{H_{2n+1}^{0}(\mathbf{g}^1)}|\equiv |H_{n+1}^{0}(\mathbf{g}^0)|\cdot |\overline{H_{n}^{1}(\mathbf{g}^1)}|+|\overline{H_{n+1}^{0}(\mathbf{g}^0)}|\cdot|H_{n}^{1}(\mathbf{g}^1)|$
\vspace{.2cm}
\item[] $|\overline{H_{2n+1}^{0}(\mathbf{g}^0)}|\equiv |H_{n+1}^{0}(\mathbf{g}^1)|\cdot |\overline{H_{n}^{1}(\mathbf{g}^1)}|+|\overline{H_{n+1}^{0}(\mathbf{g}^1)}|\cdot|H_{n}^{1}(\mathbf{g}^1)|$,
\vspace{.2cm}
\item[(iv'')] $|\overline{H_{2n+1}^{2}(\mathbf{g}^1)}|\equiv |H_{n+1}^{1}(\mathbf{g}^1)|\cdot |\overline{H_{n}^{2}(\mathbf{g}^1)}|+|\overline{H_{n+1}^{1}(\mathbf{g}^1)}|\cdot|H_{n}^{2}(\mathbf{g}^1)|,$
\vspace{.2cm}
\item[(v)] $|H_{2n}^{1}(\mathbf{g}^1)|\equiv |H_{n}^{1}(\mathbf{g}^1)|,$
\vspace{.2cm}
\item[(vi)] $|\overline{H_{2n}^{1}(\mathbf{g}^1)}|\equiv 0,$

\newpage
\item[(vii')] $|H_{2n+1}^{1}(\mathbf{g}^1)|\equiv \Big[\Big(|H_{n+1}^{0}(\mathbf{g}^0)|\cdot|H_{n}^{1}(\mathbf{g}^1)|-|\overline{H_{n+1}^{0}(\mathbf{g}^0)}|\cdot|H_{n}^{1}(\mathbf{g}^1)|$
\item[] $\qquad\qquad\qquad\qquad-|H_{n+1}^{0}(\mathbf{g}^0)|\cdot|\overline{H_{n}^{1}(\mathbf{g}^1)}|\Big)$
\vspace{-.4cm}
\item[] \begin{align*}&\qquad\qquad\times\Big(|H_{n+1}^{1}(\mathbf{g}^1)|\cdot|H_{n}^{2}(\mathbf{g}^1)|-|\overline{H_{n+1}^{1}(\mathbf{g}^1)}|\cdot|H_{n}^{2}(\mathbf{g}^1)|\\
&\qquad\qquad\qquad-|H_{n+1}^{1}(\mathbf{g}^1)|\cdot|\overline{H_{n}^{2}(\mathbf{g}^1)}|\Big)\Big]\\
&\qquad\qquad-\Big[\Big(|H_{n}^{1}(\mathbf{g}^1)|\cdot|H_{n}^{2}(\mathbf{g}^1)|-|\overline{H_{n}^{1}(\mathbf{g}^1)}|\cdot|H_{n}^{2}(\mathbf{g}^1)|\\
&\qquad\qquad\qquad-|H_{n}^{1}(\mathbf{g}^1)|\cdot|\overline{H_{n}^{2}(\mathbf{g}^1)}|\Big)\\
&\qquad\qquad\times\Big(|H_{n+1}^{0}(\mathbf{g}^0)|\cdot|H_{n+1}^{1}(\mathbf{g}^1)|-|\overline{H_{n+1}^{0}(\mathbf{g}^0)}|\cdot|H_{n+1}^{1}(\mathbf{g}^1)|\\
&\qquad\qquad\qquad-|H_{n+1}^{0}(\mathbf{g}^0)|\cdot|\overline{H_{n+1}^{1}(\mathbf{g}^1)}|\Big)\Big],
\end{align*}
\begin{align*}|H_{2n+1}^{1}(\mathbf{g}^0)|&\equiv \Big[\Big(|H_{n+1}^{0}(\mathbf{g}^1)|\cdot|H_{n}^{1}(\mathbf{g}^1)|-|\overline{H_{n+1}^{0}(\mathbf{g}^1)}|\cdot|H_{n}^{1}(\mathbf{g}^1)|\\
&\qquad\qquad\qquad-|H_{n+1}^{0}(\mathbf{g}^1)|\cdot|\overline{H_{n}^{1}(\mathbf{g}^1)}|\Big)\\
&\qquad\qquad\times\Big(|H_{n+1}^{1}(\mathbf{g}^1)|\cdot|H_{n}^{2}(\mathbf{g}^1)|-|\overline{H_{n+1}^{1}(\mathbf{g}^1)}|\cdot|H_{n}^{2}(\mathbf{g}^1)|\\
&\qquad\qquad\qquad-|H_{n+1}^{1}(\mathbf{g}^1)|\cdot|\overline{H_{n}^{2}(\mathbf{g}^1)}|\Big)\Big]\\
&\qquad\qquad-\Big[\Big(|H_{n}^{1}(\mathbf{g}^1)|\cdot|H_{n}^{2}(\mathbf{g}^1)|-|\overline{H_{n}^{1}(\mathbf{g}^1)}|\cdot|H_{n}^{2}(\mathbf{g}^1)|\\
&\qquad\qquad\qquad-|H_{n}^{1}(\mathbf{g}^1)|\cdot|\overline{H_{n}^{2}(\mathbf{g}^1)}|\Big)\\
&\qquad\qquad\times\Big(|H_{n+1}^{0}(\mathbf{g}^1)|\cdot|H_{n+1}^{1}(\mathbf{g}^1)|-|\overline{H_{n+1}^{0}(\mathbf{g}^1)}|\cdot|H_{n+1}^{1}(\mathbf{g}^1)|\\
&\qquad\qquad\qquad-|H_{n+1}^{0}(\mathbf{g}^1)|\cdot|\overline{H_{n+1}^{1}(\mathbf{g}^1)}|\Big)\Big],
\end{align*}
\item[(viii')] $ |\overline{H_{2n+1}^{1}(\mathbf{g}^1)}| \equiv \left(|H_{n}^{2}(\mathbf{g}^1)|\cdot|\overline{H_{n+1}^{0}(\mathbf{g}^1)}|-|H_{n+1}^{0}(\mathbf{g}^1)|\cdot|\overline{H_{n}^{2}(\mathbf{g}^1)}|\right)$
\vspace{-.5cm} 
\item[] \begin{align*} &\qquad\qquad\qquad\times\left(|H_{n}^{1}(\mathbf{g}^1)|\cdot|\overline{H_{n+1}^{1}(\mathbf{g}^1)}|-|H_{n+1}^{1}(\mathbf{g}^1)|\cdot|\overline{H_{n}^{1}(\mathbf{g}^1)}|\right)\\
&\qquad\qquad\qquad+|H_{n+1}^{0}(\mathbf{g}^0)|\cdot|H_{n}^{1}(\mathbf{g}^1)|\cdot|H_{n+1}^{1}(\mathbf{g}^1)|\cdot |H_{n}^{2}(\mathbf{g}^1)|\\ 
&\qquad\qquad\qquad+|H_{n}^{1}(\mathbf{g}^1)|\cdot |H_{n}^{2}(\mathbf{g}^1)|\cdot|H_{n+2}^{0}(\mathbf{g}^0)|\cdot|\overline{H_{n+2}^{1}(\mathbf{g}^1)}|.\end{align*}
\end{enumerate}
\end{quote}
\end{corollary}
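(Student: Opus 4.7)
The plan is essentially immediate given Lemma \ref{mainlemma}: each assertion of Corollary \ref{012} is obtained by specializing the parameter $p$ in the corresponding part of the lemma to the value $p=0$ or $p=1$, these being precisely the cases invoked in the induction that will establish Theorem \ref{Hankelg}. So the corollary is less a new result than a reformatting of Lemma \ref{mainlemma} into the form in which it is actually used.

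More concretely, I would organize the argument into three clusters. First, for the primed items (i'), (ii'), (iii'), (iv'), (vii'), (viii'), the stated congruences coincide verbatim with the conclusions of the corresponding primed parts of Lemma \ref{mainlemma}; where the lemma records an equality in $\B{Z}$ and the corollary only claims an equivalence modulo $2$, the descent is automatic since $a=b$ implies $a\equiv b$. Second, for the double-primed items (i''), (ii''), (iii''), (iv''), one sets $p=1$ in the corresponding double-primed part of Lemma \ref{mainlemma}; this specialization turns every occurrence of $H_n^p, H_n^{p+1}, H_{n+1}^p, H_{n+1}^{p+1}$ into $H_n^1, H_n^2, H_{n+1}^1, H_{n+1}^2$ respectively, and similarly for the overlined counterparts, yielding exactly the formulas displayed in the corollary. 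Third, for (v) and (vi) one specializes Lemma \ref{mainlemma}(v) and (vi) at $p=0$, producing $|H_{2n}^{1}(\mathbf{g}^1)|\equiv |H_{n}^{1}(\mathbf{g}^1)|$ and $|\overline{H_{2n}^{1}(\mathbf{g}^1)}|\equiv 0$, respectively.

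Because each line of the corollary is obtained by a direct substitution, there is no genuine obstacle to overcome. The only point requiring attention is the bookkeeping needed to confirm that the substitutions $p\in\{0,1\}$ match the precise Hankel matrix indices displayed; in particular, one must verify the index shifts $p\mapsto p+1$ and $p\mapsto p+2$ that appear on the right-hand sides of the lemma, and check that the mixed $\mathbf{g}^0$/$\mathbf{g}^1$ occurrences in parts (i'), (ii'), (iii'), (iv'), (vii'), (viii') are carried through unchanged. Once this clerical verification is carried out line by line, the corollary follows without further computation.
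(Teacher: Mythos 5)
Your proposal is correct and matches the paper's treatment: the paper states Corollary \ref{012} with no separate proof, precisely because each item is the verbatim primed statement of Lemma \ref{mainlemma} or its double-primed statement specialized at $p=1$ (with (v), (vi) at $p=0$), exactly as you describe. Nothing further is needed.
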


\begin{proof}[Proof of Theorem \ref{Hankelg}]  First let us note that for $p\geqslant 1$, we trivially have that $H_n^p(\mathbf{g}^0)=H_n^p(\mathbf{g}^1),$ so that we do not need to worry about proving separately the cases for $\mathbf{g}^1$ and $\mathbf{g}^0$ in this range. 

We easily check (say with MAPLE) that Theorem \ref{Hankelg} is true for $n\leqslant 12$. The rest of the proof now follows by breaking up the cases of $n$ modulo $6$; that is, check that the theorem is true for $n$ equal to $6k,6k+1,6k+2,6k+3,6k+4,$ and $6k+5$. We write here only the case when $n=6k$. All of the other cases follow {\em mutatis mutandis}.

To this end, suppose the theorem is true for all $6k<m$. If $m=6k$ for some $k$ then Corollary~\ref{012}(i') gives \begin{align*} |H_{12k}^{0}(\mathbf{g}^1)|&\equiv |H_{6k}^{0}(\mathbf{g}^0)|\cdot|H_{6k}^{1}(\mathbf{g}^1)|-|\overline{H_{6k}^{0}(\mathbf{g}^0)}|\cdot|H_{6k}^{1}(\mathbf{g}^1)|-|H_{6k}^{0}(\mathbf{g}^0)|\cdot|\overline{H_{6k}^{1}(\mathbf{g}^1)}|\\
&\equiv  1\cdot1 - 1\cdot1 - 1\cdot0 \equiv 0,
\end{align*} and \begin{align*} |H_{12k}^{0}(\mathbf{g}^0)|&=|H_{6k}^{0}(\mathbf{g}^1)|\cdot|H_{6k}^{1}(\mathbf{g}^1)|-|\overline{H_{6k}^{0}(\mathbf{g}^1)}|\cdot|H_{6k}^{1}(\mathbf{g}^1)|-|H_{6k}^{0}(\mathbf{g}^1)|\cdot|\overline{H_{6k}^{1}(\mathbf{g}^1)}|\\
&\equiv  0\cdot1 - 1\cdot1 - 0\cdot0\equiv 1.
\end{align*} 

Corollary \ref{012}(i'') gives \begin{align*} |H_{12k}^{2}(\mathbf{g}^1)|&\equiv |H_{6k}^{1}(\mathbf{g}^1)|\cdot|H_{6k}^{2}(\mathbf{g}^1)|-|\overline{H_{6k}^{1}(\mathbf{g}^1)}|\cdot|H_{6k}^{2}(\mathbf{g}^1)|-|H_{6k}^{1}(\mathbf{g}^1)|\cdot|\overline{H_{6k}^{2}(\mathbf{g}^1)}|\\
&\equiv  1\cdot1 - 0\cdot1 - 1\cdot0 \equiv 1.
\end{align*}

Corollary \ref{012}(ii') gives \begin{align*}|\overline{H_{12k}^{0}(\mathbf{g}^1)}|&\equiv |H_{6k}^{0}(\mathbf{g}^0)|\cdot |\overline{H_{6k}^{1}(\mathbf{g}^1)}|+|\overline{H_{6k}^{0}(\mathbf{g}^0)}|\cdot|H_{6k}^{1}(\mathbf{g}^1)|\\
&\equiv  1\cdot0 + 1\cdot1  \equiv 1,
\end{align*} and \begin{align*} |\overline{H_{12k}^{0}(\mathbf{g}^0)}|&\equiv |H_{6k}^{0}(\mathbf{g}^1)|\cdot |\overline{H_{6k}^{1}(\mathbf{g}^1)}|+|\overline{H_{6k}^{0}(\mathbf{g}^1)}|\cdot|H_{6k}^{1}(\mathbf{g}^1)|\\
&\equiv  0\cdot0 + 1\cdot1 \equiv 1.
\end{align*}

Corollary \ref{012}(ii'') gives \begin{align*} |\overline{H_{12k}^{2}(\mathbf{g}^1)}|&\equiv |H_{6k}^{1}(\mathbf{g}^1)|\cdot |\overline{H_{6k}^{2}(\mathbf{g}^1)}|+|\overline{H_{6k}^{1}(\mathbf{g}^1)}|\cdot|H_{6k}^{2}(\mathbf{g}^1)|\\
&\equiv 1 \cdot0 + 0\cdot1  \equiv 0.
\end{align*}

Corollary \ref{012}(iii') gives \begin{align*}|H_{12k+1}^{0}(\mathbf{g}^1)|&\equiv |H_{6k+1}^{0}(\mathbf{g}^0)|\cdot |H_{6k}^{1}(\mathbf{g}^1)|-|\overline{H_{6k+1}^{0}(\mathbf{g}^0)}|\cdot|H_{6k}^{1}(\mathbf{g}^1)|\\
&\qquad\qquad-|H_{6k+1}^{0}(\mathbf{g}^0)|\cdot|\overline{H_{6k}^{1}(\mathbf{g}^1)}|\\
&\equiv  0\cdot1 - 1\cdot1 - 0\cdot0  \equiv 1,
\end{align*} and \begin{align*}|H_{12k+1}^{0}(\mathbf{g}^0)|&\equiv |H_{6k+1}^{0}(\mathbf{g}^1)|\cdot |H_{6k}^{1}(\mathbf{g}^1)|-|\overline{H_{6k+1}^{0}(\mathbf{g}^1)}|\cdot|H_{6k}^{1}(\mathbf{g}^1)|\\
&\qquad\qquad-|H_{6k+1}^{0}(\mathbf{g}^1)|\cdot|\overline{H_{6k}^{1}(\mathbf{g}^1)}|\\
&\equiv  1\cdot1 - 1\cdot1 - 1\cdot0  \equiv 0.
\end{align*}

Corollary \ref{012}(iii'') gives \begin{align*} |H_{12k+1}^{2}(\mathbf{g}^1)|&\equiv |H_{6k+1}^{1}(\mathbf{g}^1)|\cdot |H_{6k}^{2}(\mathbf{g}^1)|-|\overline{H_{6k+1}^{1}(\mathbf{g}^1)}|\cdot|H_{6k}^{2}(\mathbf{g}^1)|\\
&\qquad\qquad-|H_{6k+1}^{1}(\mathbf{g}^1)|\cdot|\overline{H_{6k}^{2}(\mathbf{g}^1)}|\\
&\equiv 1\cdot1 - 1\cdot1 - 1\cdot0 \equiv 0.
\end{align*}

Corollary \ref{012}(iv') gives \begin{align*} |\overline{H_{12k+1}^{0}(\mathbf{g}^1)}|&\equiv |H_{6k+1}^{0}(\mathbf{g}^0)|\cdot |\overline{H_{6k}^{1}(\mathbf{g}^1)}|+|\overline{H_{6k+1}^{0}(\mathbf{g}^0)}|\cdot|H_{6k}^{1}(\mathbf{g}^1)|\\
&\equiv  0\cdot0 + 1\cdot1 \equiv 1.
\end{align*} and \begin{align*} |\overline{H_{12k+1}^{0}(\mathbf{g}^0)}|&\equiv |H_{6k+1}^{0}(\mathbf{g}^1)|\cdot |\overline{H_{6k}^{1}(\mathbf{g}^1)}|+|\overline{H_{6k+1}^{0}(\mathbf{g}^1)}|\cdot|H_{6k}^{1}(\mathbf{g}^1)|\\
&\equiv 1\cdot0 + 1\cdot1 \equiv 1.
\end{align*}

Corollary \ref{012}(iv'') gives \begin{align*} |\overline{H_{12k+1}^{2}(\mathbf{g}^1)}|&\equiv |H_{6k+1}^{1}(\mathbf{g}^1)|\cdot |\overline{H_{6k}^{2}(\mathbf{g}^1)}|+|\overline{H_{6k+1}^{1}(\mathbf{g}^1)}|\cdot|H_{6k}^{2}(\mathbf{g}^1)|\\
&\equiv 1\cdot0 + 1\cdot1 \equiv 1.
\end{align*}

Corollary \ref{012}(v) gives $|H_{12k}^{1}(\mathbf{g}^1)|\equiv |H_{6k}^{1}(\mathbf{g}^1)|\equiv 1.$

\vspace{.2cm}
Corollary \ref{012}(vi) gives $|\overline{H_{12k}^{1}(\mathbf{g}^1)}|\equiv 0.$

Corollary \ref{012}(vii') gives \begin{align*} |H_{12k+1}^{1}&(\mathbf{g}^1)|\equiv \Big[\Big(|H_{6k+1}^{0}(\mathbf{g}^0)|\cdot|H_{6k}^{1}(\mathbf{g}^1)|-|\overline{H_{6k+1}^{0}(\mathbf{g}^0)}|\cdot|H_{6k}^{1}(\mathbf{g}^1)|\\
&\qquad\qquad\qquad-|H_{6k+1}^{0}(\mathbf{g}^0)|\cdot|\overline{H_{6k}^{1}(\mathbf{g}^1)}|\Big)\\
&\qquad\times\Big(|H_{6k+1}^{1}(\mathbf{g}^1)|\cdot|H_{6k}^{2}(\mathbf{g}^1)|-|\overline{H_{6k+1}^{1}(\mathbf{g}^1)}|\cdot|H_{6k}^{2}(\mathbf{g}^1)|\\
&\qquad\qquad\qquad-|H_{6k+1}^{1}(\mathbf{g}^1)|\cdot|\overline{H_{6k}^{2}(\mathbf{g}^1)}|\Big)\Big]\\
&\qquad-\Big[\Big(|H_{6k}^{1}(\mathbf{g}^1)|\cdot|H_{6k}^{2}(\mathbf{g}^1)|-|\overline{H_{6k}^{1}(\mathbf{g}^1)}|\cdot|H_{6k}^{2}(\mathbf{g}^1)|-|H_{6k}^{1}(\mathbf{g}^1)|\cdot|\overline{H_{6k}^{2}(\mathbf{g}^1)}|\Big)\\
&\qquad\times\Big(|H_{6k+1}^{0}(\mathbf{g}^0)|\cdot|H_{6k+1}^{1}(\mathbf{g}^1)|-|\overline{H_{6k+1}^{0}(\mathbf{g}^0)}|\cdot|H_{6k+1}^{1}(\mathbf{g}^1)|\\
&\qquad\qquad\qquad-|H_{6k+1}^{0}(\mathbf{g}^0)|\cdot|\overline{H_{6k}^{1}(\mathbf{g}^1)}|\Big)\Big]\\
&\qquad\equiv ( 0\cdot1 - 1\cdot1 - 0\cdot0 )( 1\cdot1 - 1\cdot1 - 1\cdot0 )\\
&\qquad\qquad\qquad-( 1\cdot1 - 0\cdot1 - 1\cdot0 )( 0\cdot1 - 1\cdot1 - 0\cdot1 )\\
&\qquad\equiv 1,
\end{align*} and 
\begin{align*} |H_{12k+1}^{1}&(\mathbf{g}^0)|\equiv \Big[\Big(|H_{6k+1}^{0}(\mathbf{g}^1)|\cdot|H_{6k}^{1}(\mathbf{g}^1)|-|\overline{H_{6k+1}^{0}(\mathbf{g}^1)}|\cdot|H_{6k}^{1}(\mathbf{g}^1)|\\
&\qquad\qquad\qquad-|H_{6k+1}^{0}(\mathbf{g}^1)|\cdot|\overline{H_{6k}^{1}(\mathbf{g}^1)}|\Big)\\
&\qquad\times\Big(|H_{6k+1}^{1}(\mathbf{g}^1)|\cdot|H_{6k}^{2}(\mathbf{g}^1)|-|\overline{H_{6k+1}^{1}(\mathbf{g}^1)}|\cdot|H_{6k}^{2}(\mathbf{g}^1)|\\
&\qquad\qquad\qquad-|H_{6k+1}^{1}(\mathbf{g}^1)|\cdot|\overline{H_{6k}^{2}(\mathbf{g}^1)}|\Big)\Big]\\
&\qquad-\Big[\Big(|H_{6k}^{1}(\mathbf{g}^1)|\cdot|H_{6k}^{2}(\mathbf{g}^1)|-|\overline{H_{6k}^{1}(\mathbf{g}^1)}|\cdot|H_{6k}^{2}(\mathbf{g}^1)|\\
&\qquad\qquad\qquad-|H_{6k}^{1}(\mathbf{g}^1)|\cdot|\overline{H_{6k}^{2}(\mathbf{g}^1)}|\Big)\\
&\qquad\times\Big(|H_{6k+1}^{0}(\mathbf{g}^1)|\cdot|H_{6k+1}^{1}(\mathbf{g}^1)|-|\overline{H_{6k+1}^{0}(\mathbf{g}^1)}|\cdot|H_{6k+1}^{1}(\mathbf{g}^1)|\\
&\qquad\qquad\qquad-|H_{6k+1}^{0}(\mathbf{g}^1)|\cdot|\overline{H_{6k}^{1}(\mathbf{g}^1)}|\Big)\Big]\\
&\qquad\equiv ( 1\cdot1 - 1\cdot1 - 1\cdot0 )( 1\cdot1 - 1\cdot1 - 1\cdot0 )\\
&\qquad\qquad\qquad-( 1\cdot1 - 0\cdot1 - 1\cdot0 )( 1\cdot1 - 1\cdot1 - 1\cdot1 )\\
&\qquad\equiv 1.
\end{align*}

Corollary \ref{012}(viii') gives 
\begin{align*} |\overline{H_{12k+1}^{1}(\mathbf{g}^1)}| &\equiv \left(|H_{6k}^{2}(\mathbf{g}^1)|\cdot|\overline{H_{6k+1}^{0}(\mathbf{g}^1)}|-|H_{6k+1}^{0}(\mathbf{g}^1)|\cdot|\overline{H_{6k}^{2}(\mathbf{g}^1)}|\right)\\ 
&\qquad\qquad\times\left(|H_{6k}^{1}(\mathbf{g}^1)|\cdot|\overline{H_{6k+1}^{1}(\mathbf{g}^1)}|-|H_{6k+1}^{1}(\mathbf{g}^1)|\cdot|\overline{H_{6k}^{1}(\mathbf{g}^1)}|\right)\\
&\qquad\qquad+|H_{6k+1}^{0}(\mathbf{g}^0)|\cdot|H_{6k}^{1}(\mathbf{g}^1)|\cdot|H_{6k+1}^{1}(\mathbf{g}^1)|\cdot |H_{6k}^{2}(\mathbf{g}^1)|\\ 
&\qquad\qquad+|H_{6k}^{1}(\mathbf{g}^1)|\cdot |H_{6k}^{2}(\mathbf{g}^1)|\cdot|H_{6k+2}^{0}(\mathbf{g}^0)|\cdot|\overline{H_{6k+2}^{1}(\mathbf{g}^1)}|\\
&\equiv ( 1\cdot1 - 1\cdot0)( 1\cdot1 - 1\cdot0)+ 0\cdot1 \cdot1 \cdot0 + 1\cdot0 \cdot1 \cdot0\\
&\equiv 1.\end{align*}
\end{proof} 

We can easily relate Theorem \ref{Hankelg} to give a similar result for the sequence $\mathbf{f}$ of coefficients of the series $\C{F}(z)=\sum_{n\geqslant 0} z^{2^n}(1+z^{2^n})^{-1}.$ 

\begin{corollary}\label{Hankelh} Let $\mathbf{h}=\{h(n)\}_{n\geqslant 1}$ be a sequence which is equivalent modulo $2$ to $\mathbf{g}$; that is, $h(n)\equiv g(n)\ ({\rm mod}\ 2).$ Then $|H_n^1(\mathbf{h})|$ is nonzero for all $n\geqslant 1$. In particular, the determinant of $|H_n^1(\mathbf{f})|$ is nonzero for all $n\geqslant 1$.
\end{corollary}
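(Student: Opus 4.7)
The plan is to reduce Corollary~\ref{Hankelh} directly to the row of Theorem~\ref{Hankelg} asserting $|H_n^1(\mathbf{g}^0)|\equiv|H_n^1(\mathbf{g}^1)|\equiv 1\pmod 2$. The two key observations are (a) the Hankel matrix $H_n^1$ never involves the 0-th term of the sequence, so the choice of $g(0)$ is irrelevant; and (b) the determinant is a polynomial in the entries with integer coefficients, hence it is well-defined modulo $2$ from the entries modulo $2$.

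Concretely, I would first remark that by definition
\[
H_n^1(\mathbf{h})=\bigl(h(i+j-1)\bigr)_{1\leqslant i,j\leqslant n},
\]
so the entries of $H_n^1(\mathbf{h})$ are $h(1),h(2),\ldots,h(2n-1)$. Since $h(k)\equiv g(k)\pmod 2$ for every $k\geqslant 1$, every entry of $H_n^1(\mathbf{h})$ is congruent mod $2$ to the corresponding entry of $H_n^1(\mathbf{g})=H_n^1(\mathbf{g}^0)=H_n^1(\mathbf{g}^1)$. By the Leibniz formula for determinants this gives
\[
|H_n^1(\mathbf{h})|\equiv |H_n^1(\mathbf{g})|\pmod 2,
\]
and Theorem~\ref{Hankelg} shows the right-hand side is $\equiv 1\pmod 2$. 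Thus $|H_n^1(\mathbf{h})|$ is an odd integer, in particular nonzero.

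For the ``in particular'' assertion about $\mathbf{f}$, it remains only to check that $f(n)\equiv g(n)\pmod 2$ for every $n\geqslant 1$. This I would verify at the level of generating functions, using the elementary identity
\[
\frac{z^{2^n}}{1-z^{2^n}}-\frac{z^{2^n}}{1+z^{2^n}}=\frac{2z^{2^{n+1}}}{1-z^{2^{n+1}}},
\]
which, summed over $n\geqslant 0$, yields
\[
\C{G}(z)-\C{F}(z)=2\sum_{n\geqslant 1}\frac{z^{2^n}}{1-z^{2^n}}.
\]
The right-hand side has integer coefficients multiplied by $2$, so $g(n)-f(n)$ is even for every $n\geqslant 1$. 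Applying the general statement with $\mathbf{h}=\mathbf{f}$ then finishes the proof.

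There is no real obstacle here: all of the hard work sits inside Theorem~\ref{Hankelg}, and the corollary is a short transfer argument using (i) the fact that $H_n^1$ ignores the $0$-th coordinate, (ii) the invariance of determinants modulo $2$ under entrywise reduction modulo $2$, and (iii) the mod-$2$ coincidence of $\mathbf{f}$ and $\mathbf{g}$ coming from the elementary generating-function identity above.
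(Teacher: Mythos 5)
Your argument is correct and is essentially the paper's own proof: reduce the entries of $H_n^1(\mathbf{h})$ modulo $2$, observe that $H_n^1$ never sees the $0$th term so $H_n^1(\mathbf{h})\equiv H_n^1(\mathbf{g}^1)$, and invoke the line of Theorem~\ref{Hankelg} giving $|H_n^1(\mathbf{g}^1)|\equiv 1$, so the determinant is odd and hence nonzero. Your explicit verification that $f(n)\equiv g(n)\pmod 2$ via the identity $\C{G}(z)-\C{F}(z)=2\sum_{n\geqslant 1}z^{2^n}(1-z^{2^n})^{-1}$ is a small but welcome addition that the paper leaves implicit.
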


\begin{proof} It is enough to note that since $h(n)\equiv g(n)\ ({\rm mod}\ 2)$ for all $n\geqslant 1$, and so modulo $2$ we have $H_n^1(\mathbf{h})\equiv H_n^1(\mathbf{g}^1)$.
\end{proof}

\begin{proof}[Proof of Theorem \ref{mainH}] This is a direct consequence of Theorem \ref{Hankelg} and Corollary~\ref{Hankelh}.
\end{proof}

%%%%%%%%%%%%%%%%%%%%%%%%%%%%%%%%%%%%%%%%%%%%%%%%%%%%%%%%%%%%%%%%%%%%%%%%%%%%%
\section{Rational approximation of values of Golomb's series}\label{Sec3}
%%%%%%%%%%%%%%%%%%%%%%%%%%%%%%%%%%%%%%%%%%%%%%%%%%%%%%%%%%%%%%%%%%%%%%%%%%%%%

Given an analytic function $F(z)$, the rational function $R(x)$, with the degree of the numerator bounded by $m$ and the degree of the denominator bounded by $n$, is the $[m/n]_F$ {\em Pad\'e approximant} to $F(z)$ provided $$F(z)-R(z)=O(z^{m+n+1}).$$

We will need the following lemma connecting Hankel determinants to Pad\'e approximants (see \cite[Page 35]{B1980}).

\begin{lemma}[Brezinski \cite{B1980}]\label{HP} Let $\mathbf{c}=\{c(n)\}_{n\geqslant 0}$ and $\C{C}(z)=\sum_{n\geqslant 0}c(n)z^n\in\B{Z}[[z]].$ If $\det H_k^0(\mathbf{c})\neq 0$ for all $k\geqslant 1$, then the Pad\'e approximant $[k-1/k]_\C{C}$ exists and satisfies $$\C{C}(z)-[k-1/k]_\C{C}=\frac{\det H_{k+1}^0(\mathbf{c})}{\det H_k^0(\mathbf{c})}z^{2k}+O(z^{2k+1}).$$
\end{lemma}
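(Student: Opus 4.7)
The plan is to construct $[k-1/k]_\C{C}=P(z)/Q(z)$ directly by reducing it to a linear system whose coefficient matrix contains $H_k^0(\mathbf{c})$, and then to read off the leading error coefficient via Cramer's rule applied to the one-larger system whose coefficient matrix is $H_{k+1}^0(\mathbf{c})$.

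First I would write $Q(z)=q_0+q_1z+\cdots+q_kz^k$ and look for a polynomial $P(z)$ of degree at most $k-1$ satisfying $\C{C}(z)Q(z)-P(z)=O(z^{2k})$. Matching the coefficients of $z^0,\ldots,z^{k-1}$ determines $p_0,\ldots,p_{k-1}$ directly in terms of the $q_i$'s, so the nontrivial content is the vanishing of the coefficients of $z^k,\ldots,z^{2k-1}$ in $\C{C}(z)Q(z)$. Reindexing by $l=j-k$ and $r=k-i$, these conditions become
$$\sum_{r=0}^k c(l+r)\,q_{k-r}=0,\qquad l=0,1,\ldots,k-1.$$
This is a $k\times(k+1)$ homogeneous linear system whose leftmost $k\times k$ block (columns $r=0,\ldots,k-1$, unknowns $q_k,\ldots,q_1$) is exactly $H_k^0(\mathbf{c})$. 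Because $\det H_k^0(\mathbf{c})\neq0$, one may prescribe $q_0\neq0$ arbitrarily and solve uniquely for $q_1,\ldots,q_k$; in particular $Q(0)=q_0\neq0$, so $R(z):=P(z)/Q(z)$ is a well-defined rational function and furnishes the Pad\'e approximant $[k-1/k]_\C{C}$.

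Next I would extract the leading error coefficient. Let $e_{2k}$ denote the coefficient of $z^{2k}$ in $\C{C}(z)Q(z)-P(z)$, so that $e_{2k}=\sum_{i=0}^{k}q_i\,c(2k-i)=\sum_{r=0}^kc(k+r)\,q_{k-r}$. Adjoining this as a $(k+1)$-st equation to the homogeneous system above produces the $(k+1)\times(k+1)$ system
$$H_{k+1}^0(\mathbf{c})\begin{pmatrix}q_k\\q_{k-1}\\ \vdots\\q_0\end{pmatrix}=\begin{pmatrix}0\\ \vdots\\0\\e_{2k}\end{pmatrix}.$$
Since $\det H_k^0(\mathbf{c})\neq0$, Cramer's rule gives a closed formula for $q_0$: the numerator is the determinant of the matrix obtained from $H_{k+1}^0(\mathbf{c})$ by replacing its last column with $(0,\ldots,0,e_{2k})^T$, which upon expansion along that column equals $e_{2k}\cdot\det H_k^0(\mathbf{c})$. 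Therefore
$$q_0=\frac{e_{2k}\cdot\det H_k^0(\mathbf{c})}{\det H_{k+1}^0(\mathbf{c})}.$$

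To finish, I would simply divide $\C{C}(z)Q(z)-P(z)=e_{2k}z^{2k}+O(z^{2k+1})$ by $Q(z)$ and use $Q(0)=q_0$ to obtain
$$\C{C}(z)-R(z)=\frac{e_{2k}}{q_0}z^{2k}+O(z^{2k+1})=\frac{\det H_{k+1}^0(\mathbf{c})}{\det H_k^0(\mathbf{c})}\,z^{2k}+O(z^{2k+1}),$$
which is exactly the claim. The proof is essentially a careful bookkeeping exercise; the only place where one must be attentive is tracking the reversal $(q_k,\ldots,q_0)$ of the unknowns when identifying the coefficient matrix with the Hankel matrix, but this reversal affects the numerator and denominator determinants by the same sign and so is harmless. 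No deeper obstacle is anticipated.
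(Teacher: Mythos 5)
Your proof is correct. Note first that the paper itself offers no argument for this lemma: it is quoted verbatim from Brezinski's book (the citation to \cite[Page 35]{B1980}) and used as a black box, so any proof you give is necessarily ``different from the paper's.'' What you have written is the classical linear-algebra derivation of the Pad\'e error formula, and it holds up: the order conditions on the coefficients of $z^k,\dots,z^{2k-1}$ in $\C{C}(z)Q(z)$ do form a $k\times(k+1)$ system whose left $k\times k$ block, under your indexing $r=k-i$, is exactly $H_k^0(\mathbf{c})$; invertibility of that block lets you normalize $q_0\neq 0$ and solve, and $Q(0)=q_0\neq0$ justifies both the existence of $[k-1/k]_{\C{C}}$ and the final division by $Q(z)$. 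The Cramer step is also sound, with one point worth making explicit: applying Cramer's rule to the $(k+1)\times(k+1)$ system requires $\det H_{k+1}^0(\mathbf{c})\neq 0$, which is available only because the hypothesis assumes nonvanishing for \emph{all} $k\geqslant 1$ (and this is precisely how the lemma is invoked later, in Lemma \ref{Hg}, where Corollary \ref{Hankelh} supplies nonvanishing at every order). Your closing remark about the sign of the reversal $(q_k,\dots,q_0)$ is actually moot in your setup --- by indexing columns by $r$ you never form the column-reversed Hankel matrix, so no sign ever appears --- but the observation does no harm. A pleasant by-product of your formula $q_0=e_{2k}\det H_k^0(\mathbf{c})/\det H_{k+1}^0(\mathbf{c})$ is that $e_{2k}\neq 0$ is forced, which is exactly the nonvanishing of the leading error coefficient that the paper needs when it sets $h_k:=\det H_{k+1}^1(\mathbf{h})/\det H_k^1(\mathbf{h})$.
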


An immediate consequence of Theorem \ref{Hankelg} and the Lemma \ref{HP} is the following lemma.

\begin{lemma}\label{Hg} Let $k\geqslant 1$, $\mathbf{h}=\{h(n)\}_{n\geqslant 1}$ be a sequence for which $h(n)\equiv g(n)\ ({\rm mod}\ 2)$, and $\C{H}(z):=\sum_{n\geqslant 1}h(n)z^n$. Then there exists a nonzero rational number $h_k$ and polynomials $P_k(z),Q_k(z)\in\B{Z}[z]$ with degrees bounded above by $k$ such that $$\C{H}(z)-\frac{P_k(z)}{Q_k(z)}=h_k z^{2k+1}+O(z^{2k+2}).$$
\end{lemma}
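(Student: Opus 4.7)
The plan is to derive this as a direct consequence of Lemma \ref{HP} after an elementary shift of indexing, using Theorem \ref{Hankelg} (via Corollary \ref{Hankelh}) to supply the required nonvanishing of Hankel determinants. The only mild nuisance is that $\C{H}(z)$ begins at $z^1$ whereas Lemma \ref{HP} is phrased for a power series starting at $z^0$, so I would first pass to $\C{H}(z)/z$ and then multiply the resulting Pad\'e identity by $z$.

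To this end, set $\C{C}(z) := \C{H}(z)/z = \sum_{m \geqslant 0} c(m) z^m$ with $c(m) := h(m+1)$. Under this reindexing,
$$H_k^0(\mathbf{c}) = \bigl(c(i+j-2)\bigr)_{1 \leqslant i,j \leqslant k} = \bigl(h(i+j-1)\bigr)_{1 \leqslant i,j \leqslant k} = H_k^1(\mathbf{h}).$$
Since $h(n) \equiv g(n) \ ({\rm mod}\ 2)$ for every $n \geqslant 1$, reducing entrywise modulo $2$ gives $H_k^1(\mathbf{h}) \equiv H_k^1(\mathbf{g}^1) \ ({\rm mod}\ 2)$, and Theorem \ref{Hankelg} yields $|H_k^1(\mathbf{h})| \equiv |H_k^1(\mathbf{g}^1)| \equiv 1 \ ({\rm mod}\ 2)$. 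Hence $\det H_k^0(\mathbf{c})$ is an odd integer, in particular nonzero, for every $k \geqslant 1$.

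With the hypothesis of Lemma \ref{HP} verified, the Pad\'e approximant $[k-1/k]_{\C{C}}(z) = A_k(z)/B_k(z)$ exists with $\deg A_k \leqslant k-1$ and $\deg B_k \leqslant k$; clearing the common rational denominator in the linear system defining the approximant, we may take $A_k, B_k \in \B{Z}[z]$. Multiplying the identity
$$\C{C}(z) - \frac{A_k(z)}{B_k(z)} = \frac{\det H_{k+1}^0(\mathbf{c})}{\det H_k^0(\mathbf{c})}\, z^{2k} + O(z^{2k+1})$$
through by $z$ gives
$$\C{H}(z) - \frac{zA_k(z)}{B_k(z)} = \frac{\det H_{k+1}^1(\mathbf{h})}{\det H_k^1(\mathbf{h})}\, z^{2k+1} + O(z^{2k+2}).$$
Setting $P_k(z) := z A_k(z)$, $Q_k(z) := B_k(z)$ and $h_k := \det H_{k+1}^1(\mathbf{h})/\det H_k^1(\mathbf{h})$ then completes the argument: both $P_k$ and $Q_k$ lie in $\B{Z}[z]$ with degrees at most $k$, and $h_k$ is a ratio of nonzero integers, hence a nonzero rational. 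Since all of the real work has already been carried out in Theorem \ref{Hankelg}, there is no substantive obstacle here; the step is simply a matter of unpacking what ``$\det H_k^1(\mathbf{h}) \neq 0$ for every $k \geqslant 1$'' means in Pad\'e-approximant language.
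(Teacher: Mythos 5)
Your proposal is correct and follows essentially the same route as the paper: pass to $\C{H}(z)/z$, identify $H_k^0$ of the shifted sequence with $H_k^1(\mathbf{h})$, invoke the mod-$2$ nonvanishing from Theorem \ref{Hankelg} (via Corollary \ref{Hankelh}) to apply Lemma \ref{HP}, and multiply the resulting Pad\'e identity by $z$. The only difference is cosmetic: you spell out the integrality of the Pad\'e numerator and denominator, which the paper simply asserts.
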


\begin{proof} Define the function $\C{H}'(z):=\C{H}(z)/z=\sum_{n\geqslant 0}h'(n)z^n$. Then $h'(n)=h(n+1)$ for all $n\geqslant 0$, and by Corollary \ref{Hankelh} for all $k\geqslant 1$ we have that $$H_{k}^1(\mathbf{h})=H_{k}^0(\mathbf{h'})\neq 0.$$ By Lemma \ref{HP} we have that the $[k-1/k]_{\C{H}'}$ exists and satisfies $$\C{H}'(z)-[k-1/k]_{\C{H}'}=\frac{\det H_{k+1}^1(\mathbf{h})}{\det H_k^1(\mathbf{h})}z^{2k}+O(z^{2k+1});$$ that is there exists polynomials $R_{k}(z),Q_{k}(z)\in\B{Z}[z]$, with $$\deg R_{k}(z)\leqslant k-1\qquad\mbox{and}\qquad\deg Q_{k}(z)\leqslant k,$$ such that \begin{equation}\label{HRQ} \C{H}'(z)-\frac{R_k(z)}{Q_k(z)}=h_k z^{2k}+O(z^{2k+1}),\end{equation} where we have set $h_k:={\det H_{k+1}^1(\mathbf{h})}/{\det H_k^1(\mathbf{h})}$ which is nonzero, since each of the numerator and denominator are nonzero. Multiplying both sides of \eqref{HRQ} by $z$ and denoting $P_k(z):=zR_k(z)$ proves the lemma. 
\end{proof}

Along with Lemma \ref{Hg} we will need the following result of Adamczewski and Rivoal \cite[Lemma 4.1]{AR2009} and a modification of a lemma of Bugeaud \cite[Lemma 2]{Bpreprint}.

\begin{lemma}[Adamczewski and Rivoal \cite{AR2009}]\label{ARLem} Let $\xi, \gd, \rho$ and $\gq$ be real numbers such that $0<\gd\leqslant\rho$ and $\gq\geqslant 1$. Let us assume that there exists a sequence $\{p_n/q_n\}_{n\geqslant 1}$ of rational numbers and some positive constants $c_0,c_1$ and $c_2$ such that both $$q_n<q_{n+1}\leqslant c_0 q_n^\gq,$$ and $$\frac{c_1}{q_n^{1+\rho}}\leqslant\left|\xi-\frac{p_n}{q_n}\right|\leqslant\frac{c_2}{q_n^{1+\gd}}.$$ Then we have that $$\mu(\xi)\leqslant (1+\rho)\frac{\gq}{\gd}.$$
\end{lemma}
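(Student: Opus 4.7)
The plan is to bound $|\xi-p/q|$ from below for an arbitrary rational $p/q$ in lowest terms (with $q$ sufficiently large) by playing it against a well--chosen convergent $p_N/q_N$ from the given sequence. Specifically, given $p/q$, I would let $N=N(q)$ be the smallest index for which $q_N\geqslant (2c_2 q)^{1/\gd}$. Combining the definition of $N$ with the growth hypothesis $q_{N}\leqslant c_0 q_{N-1}^\gq$ gives the crucial two--sided control
$$(2c_2 q)^{1/\gd}\leqslant q_N\leqslant c_0(2c_2 q)^{\gq/\gd},$$
so that $q_N$ is comparable to $q^{1/\gd}$ up to the factor $\gq$ coming from one step of the growth bound.

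Next I would split into two cases. If $p/q=p_N/q_N$ as fractions, then since $p/q$ is in lowest terms $q$ divides $q_N$, hence $q\leqslant q_N$; the lower bound on $|\xi-p_N/q_N|$ then yields
$$\left|\xi-\frac{p}{q}\right|\geqslant \frac{c_1}{q_N^{1+\rho}}\gg q^{-(1+\rho)\gq/\gd}.$$
If instead $p/q\neq p_N/q_N$, then $|p/q-p_N/q_N|\geqslant 1/(q q_N)$, and the triangle inequality together with the upper estimate $|\xi-p_N/q_N|\leqslant c_2/q_N^{1+\gd}$ gives
$$\left|\xi-\frac{p}{q}\right|\geqslant \frac{1}{q\, q_N}-\frac{c_2}{q_N^{1+\gd}}.$$
The defining inequality $q_N^\gd\geqslant 2c_2 q$ is exactly what is needed to absorb the subtracted term into half of the first one, giving $|\xi-p/q|\geqslant 1/(2q\, q_N)\gg q^{-1-\gq/\gd}$.

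Combining the two cases, we obtain $|\xi-p/q|\gg q^{-\mu}$ with $\mu=\max\{(1+\rho)\gq/\gd,\,1+\gq/\gd\}$. Using the hypothesis $\gd\leqslant\rho$ and $\gq\geqslant 1$, one checks that $1+\gq/\gd\leqslant (1+\rho)\gq/\gd$ (equivalently, $\gd\leqslant\rho\gq$, which follows at once), so the first exponent dominates and the definition of the irrationality exponent immediately gives $\mu(\xi)\leqslant(1+\rho)\gq/\gd$.

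The main delicate point is the balancing act in choosing $N$: it must be large enough that the approximation error $|\xi-p_N/q_N|$ is negligible compared to the gap $|p/q-p_N/q_N|$ (this forces $q_N^\gd\gtrsim q$), while at the same time being small enough that $q_N$ itself does not blow up the final bound. The hypothesis $q_{n+1}\leqslant c_0 q_n^\gq$ is precisely what prevents $q_N$ from overshooting, and it is this growth control that puts the exponent $\gq$ into the final answer $(1+\rho)\gq/\gd$.
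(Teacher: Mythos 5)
Your argument is correct: the choice of $N$ as the least index with $q_N^{\gd}\geqslant 2c_2q$, the two cases according to whether $p/q$ coincides with $p_N/q_N$, and the final comparison $1+\gq/\gd\leqslant(1+\rho)\gq/\gd$ (i.e.\ $\gd\leqslant\rho\gq$) all check out. Note, however, that the paper itself offers no proof of this lemma --- it is quoted verbatim from Adamczewski and Rivoal \cite[Lemma 4.1]{AR2009} --- and your argument is essentially the standard triangle-inequality proof given in that reference, so there is nothing to compare against within the paper.
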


\begin{lemma}[Modified Bugeaud]\label{Blem2} Let $K\geqslant 1$ and $n_0$ be positive integers. Let $(a_j)_{j\geqslant 1}$ be the increasing sequence of integers composed of all the numbers of the form $k2^n$, where $n\geqslant n_0$ and $k$ ranges over all the odd integers in $[2^{K-1}+1,2^K+1]$. Then $$a_{j+1}\leqslant \left(\frac{2^{K-1}+3}{2^{K-1}+1}\right)a_j.$$
\end{lemma}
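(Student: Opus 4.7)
The plan is to enumerate the terms of $(a_j)$ by their $2$-adic valuation and then verify the ratio bound by a short case analysis. Since every $k$ under consideration is odd, the term $k2^n$ has $2$-adic valuation exactly $n$, so I would decompose the sequence into disjoint \emph{levels} $\ell_n$, $n\geqslant n_0$, where $\ell_n$ consists of all values $k2^n$ with $k$ odd in $[2^{K-1}+1,2^K+1]$. The first thing to check is that these levels appear in order along $(a_j)$: the largest element of $\ell_n$ is $(2^K+1)2^n$, while the smallest element of $\ell_{n+1}$ is $(2^K+2)2^n$ (when $K\geqslant 2$) or $3\cdot 2^{n+1}=6\cdot 2^n$ (when $K=1$), and in either case the former is strictly smaller, so $\ell_n$ is traversed entirely before $\ell_{n+1}$.

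With this structure in hand, two consecutive terms $a_j$ and $a_{j+1}$ fall into exactly one of two cases. If both belong to the same level $\ell_n$, they correspond to consecutive odd integers $k$ and $k+2$ in the allowed range, giving
\[
\frac{a_{j+1}}{a_j} \;=\; 1+\frac{2}{k} \;\leqslant\; 1+\frac{2}{2^{K-1}+1} \;=\; \frac{2^{K-1}+3}{2^{K-1}+1},
\]
with the maximum attained at the smallest admissible $k$. If instead $a_j$ is the final element of $\ell_n$ and $a_{j+1}$ the first of $\ell_{n+1}$, write $k_{\min}$ for the smallest odd integer in $[2^{K-1}+1,2^K+1]$, so $k_{\min}=2^{K-1}+1$ when $K\geqslant 2$ and $k_{\min}=3$ when $K=1$. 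For $K\geqslant 2$ the ratio $a_{j+1}/a_j=2k_{\min}/(2^K+1)$ simplifies to $(2^K+2)/(2^K+1)$, and cross-multiplying reduces the inequality $(2^K+2)/(2^K+1)\leqslant(2^{K-1}+3)/(2^{K-1}+1)$ to $2^{K-1}+2^K+1\geqslant 0$, which is trivial. For $K=1$ only the between-level case occurs (each $\ell_n$ is a singleton), and the ratio equals $2=(2^0+3)/(2^0+1)$, so the bound is met with equality.

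The argument presents no real obstacle: the observation doing all the work is that the sequence is automatically organized level-by-level by $2$-adic valuation, after which the bound $a_{j+1}\leqslant\bigl((2^{K-1}+3)/(2^{K-1}+1)\bigr)a_j$ follows at once from the two elementary sub-case estimates above.
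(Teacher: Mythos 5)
Your proof is correct and follows essentially the same route as the paper's: both split into the case where $a_j$ and $a_{j+1}$ share the same power of $2$ (ratio at most $\tfrac{k+2}{k}\leqslant\tfrac{2^{K-1}+3}{2^{K-1}+1}$) and the case where the power of $2$ increments (ratio $\tfrac{2^K+2}{2^K+1}$). Your version is in fact slightly more careful than the paper's, since you explicitly verify that the valuation levels occur in order and handle $K=1$ separately.
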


\begin{proof} Let $n$ be large enough and consider the increasing sequence $(a_j)_{j\geqslant 1}$ of all integers of the form $k2^n$ where $k$ is an odd number in $[2^{K-1}+1,2^K+1].$ Note that for a given $j$ we have that for some $m$ and some odd number $a$ with $1\leqslant a\leqslant 2^{K-1}+1$ we have $a_j=2^m(2^{K-1}+a).$ We consider two cases.

If $a<2^{K-1}+1$, then $a_{j+1}\leqslant 2^m(2^{K-1}+a+2),$ so that $$\frac{a_{j+1}}{a_j}\leq \frac{2^{K-1}+a+2}{2^{K-1}+a}\leqslant \frac{2^{K-1}+3}{2^{K-1}+1}.$$ 

If $a=2^{K-1}+1$, then $a_{j+1}\leqslant 2^{m+1}(2^{K-1}+1)=2^{m}(2^{K}+2),$ so that $$\frac{a_{j+1}}{a_j}\leqslant \frac{2^{K}+2}{2^{K}+1}\leqslant \frac{2^{K-1}+3}{2^{K-1}+1}.$$ This proves the lemma.
\end{proof}

\begin{proof}[Proof of Theorem \ref{main}] Let $\epsilon\in\{-1,1\}$ and set $$\C{H}(z):=\sum_{n\geqslant 0}\frac{z^{2^n}}{1+\epsilon z^{2^{k}}}=\sum_{n\geqslant 1}h(n)z^n.$$ Note here that $\C{H}(z)$ satisfies the functional equation $$\C{H}(z^{2^m})=\C{H}(z)-\sum_{k=0}^{m-1}\frac{z^{2^{k}}}{1+\epsilon z^{2^{k}}}.$$
Applying Lemma \ref{Hg}, there exist polynomials $P_{k,0}(z),Q_{k,0}(z)\in\B{Z}[z]$, with both $\deg P_{k,0}(z)$ and $\deg Q_{k,0}(z)$ at most $k$, and a nonzero $h_k\in\B{Q}$ such that $$\C{H}(z)-\frac{P_{k,0}(z)}{Q_{k,0}(z)}=h_kz^{2k+1}+O(z^{2k+2}).$$ Thus sending $z\mapsto z^{2^m}$ we have that $$\C{H}(z^{2^m})-\frac{P_{k,0}(z^{2^m})}{Q_{k,0}(z^{2^m})}=h_kz^{2^{m}(2k+1)}+O(z^{2^m(2k+2)}),$$ and so using the functional equation for $\C{H}(z)$ we then have that $$\C{H}(z)-\left(\sum_{k=0}^{m-1}\frac{z^{2^{k}}}{1+\epsilon z^{2^{k}}}+\frac{P_{k,0}(z^{2^m})}{Q_{k,0}(z^{2^m})}\right)=h_kz^{2^{m}(2k+1)}+O(z^{2^m(2k+2)}).$$ Now define $P_{k,m}(z)$ and $Q_{k,m}(z)$ by $$\frac{P_{k,m}(z)}{Q_{k,m}(z)}:=\sum_{k=0}^{m-1}\frac{z^{2^{k}}}{1+\epsilon z^{2^{k}}}+\frac{P_{k,0}(z^{2^m})}{Q_{k,0}(z^{2^m})},$$ so that $$\C{H}(z)-\frac{P_{k,m}(z)}{Q_{k,m}(z)}=h_kz^{2^{m}(2k+1)}+O(z^{2^m(2k+2)}).$$ 

Now let $b\geqslant 2$ be an integer and as before set $z=\frac{1}{b}.$ Then for $\eps>0$, we have for large enough $m$, say $m\geqslant m_0(k)$, that $$(1-\eps)h_kb^{-2^{m}(2k+1)}\leqslant \left|\C{H}(1/b)-\frac{P_{k,m}(1/b)}{Q_{k,m}(1/b)}\right|\leqslant (1+\eps)h_kb^{-2^{m}(2k+1)}.$$ 

To get the degrees of $P_{k,m}(z)$ and $Q_{k,m}(z)$ we write \begin{equation}\label{PQsum}\frac{P(z)}{Q(z)}=\sum_{k=0}^{m-1}\frac{z^{2^{k}}}{1+\epsilon z^{2^{k}}}.\end{equation} Note that using \eqref{PQsum} it is immediate that $$\deg P(z)\leqslant \deg Q(z)\leqslant 2^{m}.$$ Using the definitions of $P(z)$ and $Q(z)$, we have that both $$\deg Q_{k,m}(z)=\deg Q(z)Q_{k,0}(z^{2^m})=\deg Q(z)+\deg Q_{k,0}(z^{2^m})\leqslant 2^{m}(k+1),$$ and $$\deg P_{k,m}(z)=\max\{\deg P(z)Q_{k,0}(z^{2^m}),P_{k,0}(z^{2^m})\}\leqslant 2^{m}(k+1).$$

Define the integers $$p_{k,m}:=b^{2^{m}(k+1)}P_{k,m}(1/b)$$ and $$q_{k,m}:=b^{2^{m}(k+1)}Q_{k,m}(1/b).$$

Since $h_k$ is nonzero there exist positive real constants $c_i(k)$ ($i=3,\ldots,6$) depending only on $k$ so that \begin{equation}\label{fqkm}c_3(k)b^{2^{m}(k+1)}\leqslant q_{k,m}\leqslant c_4(k)b^{2^{m}(k+1)},\end{equation} and \begin{equation}\label{Fpq}\frac{c_5(k)}{b^{2^{m}(2k+1)}}\leqslant\left|\C{H}(1/b)-\frac{p_{k,m}}{q_{k,m}}\right|\leqslant\frac{c_{6}(k)}{b^{2^{m}(2k+1)}}.\end{equation}

Thus by \eqref{fqkm} there are positive constants $c_{7}(k)$ and $c_{8}(k)$ such that $$\frac{c_{7}(k)}{q_{k,m}^{2}}\leqslant \frac{1}{b^{2^{m+1}(2k+1)}}\leqslant \frac{c_{8}(k)}{q_{k,m}^{2}}.$$ Applying this to \eqref{Fpq} yields $$\frac{c_{9}(k)}{q_{k,m}^{1+\frac{k}{k+1}}}\leqslant \left|\C{H}(1/b)-\frac{p_{k,m}}{q_{k,m}}\right|\leqslant\frac{c_{10}(k)}{q_{k,m}^{1+\frac{k}{k+1}}},$$ for some positive constants $c_{9}(k)$ and $c_{10}(k)$, from which we deduce that \begin{equation}\label{q2Fq2}\frac{c_{9}(k)}{q_{k,m}^{2}}\leqslant \left|\C{H}(1/b)-\frac{p_{k,m}}{q_{k,m}}\right|\leqslant\frac{c_{10}(k)}{q_{k,m}^{1+\frac{k}{k+1}}}.\end{equation}

Let $K\geqslant 1$ be an integer and denote by $m_0(k)$ the integer such that for $m\geqslant m_0(k)$ the sequence $\{q_{k,m}\}_{m\geqslant m_0(k)}$ is increasing. We define the sequence of positive integers $\{Q_{K,n}\}_{n\geqslant 1}$ as the sequence of all the integers $q_{k,m}$ with $k+1$ odd, $2^{K-1}+1\leqslant k+1\leqslant 2^K+1$, $m\geqslant m_0(k)$, put in increasing order. Then by Lemma \ref{Blem2} and \eqref{fqkm} there is an $n_0(K)$ and a positive constant $C_0(K)$ such that \begin{equation}\label{fQQQ}Q_{K,n}<Q_{K,n+1}\leqslant C_0(K)Q_{K,n}^{\frac{2^{K-1}+3}{2^{K-1}+1}}\end{equation} for all $n\geqslant n_0(K).$ By \eqref{q2Fq2}, there are positive integers $P_{K,n}$ and $Q_{K,n}$ and positive constants $C_1(K)$ and $C_2(K)$ such that \begin{equation}\label{FF}\frac{C_1(K)}{Q_{K,n}^{2}}\leqslant \left|\C{H}(1/b)-\frac{P_{K,n}}{Q_{K,n}}\right|\leqslant\frac{C_2(K)}{Q_{K,n}^{1+\frac{2^K}{2^K+1}}};\end{equation} here we have taken $P_{K,n}$ to be the $p_{k,m}$ associated to $q_{k,m}=Q_{K,n}$.

Applying Lemma \ref{ARLem}, using \eqref{FF} and \eqref{fQQQ}, we have that $$\mu\left(\C{H}(1/b)\right)\leqslant 2\left(\frac{2^{K-1}+3}{2^{K-1}+1}\right)\left(\frac{2^K+1}{2^K}\right).$$ Since $K$ can be taken arbitrarily large, we have that $\mu\left(\C{H}(1/b)\right)\leqslant 2$, and since $\C{H}(1/b)$ is transcendental (or even just using irrationality) we have that $$\mu\left(\C{H}(1/b)\right)=2.$$ 

Choosing $\epsilon=-1$ gives that $\mu\left(\C{G}(1/b)\right)=2$, and that choosing $\epsilon=1$ gives that $\mu\left(\C{F}(1/b)\right)=2$. This proves the theorem.
\end{proof}

\noindent\tbf{Acknowledgements} We wish to thank Yann Bugeaud, Kevin Hare, Cameron Stewart, and Jeffrey Shallit for helpful comments and conversations.

%%%%%%%%%%%%%%%%%%%%%%%%%%%%%%%%%%%%%%%%%%%%%%%%%%%%%%%%%%%%%%%%%%%%%%%%%%%%%
%%%%%%%%%%%%%%%%%%%%%%%%%%%%%%%%%%%%%%%%%%%%%%%%%%%%%%%%%%%%%%%%%%%%%%%%%%%%%
\bibliographystyle{amsalpha}

\end{document}